\newenvironment{proof}{{\bf{Proof}}\\}{$\square$\\}
\newtheorem{thm}{Theorem}[section]
\newtheorem{lem}{Lemma}[section]
\newtheorem{cor}{Corollary}[section]
\newtheorem{prop}{Proposition}[section]
\newtheorem{definition}{Definition}[section]
\newtheorem{rem}{Remark}[section]
\newtheorem{algo}{Algorithm}[section]
\newtheorem{exe}{Example}[section]
\def\Nset{\mathbb N}
\def\Rset{\mathbb R}
\def\Cset{\mathbb C}
\def\Pset{\mathbb P}
\def\lp{\left(}
\def\rp{\right)}
\numberwithin{equation}{section}
\def\lp{\left(}
\def\rp{\right)}
\def\bsm{ \lp \begin{smallmatrix} }
\def\esm{ \end{smallmatrix} \rp}
\def\PX{\mathbb P}
\def\PXz{\PX_0}
\def\PXi{\PX_{\infty}}
\def\PXg{\PX_{g}}
\def\PXr{\PX_{r}}
\begin{document}

\numberwithin{equation}{section}

\title{On a perturbation theory of Hamiltonian systems with periodic coefficients}

\date{ }
\author{Traor\'e  G. Y. Arouna, \thanks{%
 Universit\'e  F\'elix Houphou\"et-Boigny  de Cocody-Abidjan.
  Laboratoire de  Math\'ematiques Appliqu\'ees. UFR   Math\'ematiques   et Informatique,
  22  BP  582  Abidjan  22, C\^ote d'Ivoire (traorearounagogbeyves@yahoo.ca)}
\and  Mouhamadou Dosso\thanks{%
 Universit\'e F\'elix Houphou\"et-Boigny de cocody-Abidjan. Laboratoire de  Math\'ematiques Appliqu\'ees.
  UFR de Math\'ematiques et Informatique,  22 BP 582 Abidjan  22,
 C\^ote d'Ivoire (mouhamadou.dosso@univ-fhb.edu.ci or mouhamadoudoss@yahoo.fr)}
 ,\and  Jean-Claude  Koua Brou\thanks{%
  Universit\'e  F\'elix Houphou\"et-Boigny  de Cocody-Abidjan.
  Laboratoire de  Math\'ematiques Appliqu\'ees. UFR   Math\'ematiques   et Informatique,
  22  BP  582  Abidjan  22, C\^ote d'Ivoire ($k_{-}$brou@hotmail.com)}
  }

\maketitle

\begin{abstract}
A  theory of  rank $k\ge 2$  perturbation  of symplectic  matrices  and  Hamiltonian systems with periodic coefficients
  using    a base of  isotropic  subspaces, is presented.
After showing that the fundamental matrix ${\displaystyle \left(\widetilde{X}(t)\right)_{t\ge 0}}$  of  the rank $k$ perturbation
of  Hamiltonian system with periodic  coefficients
 and the rank $k$  perturbation of the fundamental  matrix  ${\displaystyle \left(X(t)\right)_{t\ge 0}}$  of the  unperturbed system are the same,
 the Jordan canonical form of ${\displaystyle \left(\widetilde{X}(t)\right)_{t\ge 0}}$   is given.
   Two  numerical examples   illustrating this theory and the consequences of  rank $k$ perturbations
    on the strong stability of Hamiltonian systems were  also given.
\vspace{0.5cm}

{\bf 2010 Mathematics Subject Classification} : 35E05 - 65F10 - 65F15 - 70H05 - 93D20.

\begin{tabular}{lcl}
{\bf Key words} &:&  Hamiltonian system - symplectic matrix - isotropic subspace - perturbations - \\
 & &  strong stability  \\
 \end{tabular}
\end{abstract}

\section{Introduction}\label{Sec1}
The Hamiltonian systems  with periodic coefficients  are generally derived from
physical problems and engineering \cite{YS}.
 These  systems  are differential equations  with periodic coefficients
that originate from the theory of optimal control  \cite{Brez_02,hass_99}  and parametric resonance \cite{god-sad_06}.
They can be put in the form
\begin{equation}
J\frac{dx(t)}{dt}=H(t)x(t),\quad t\in \Rset
\label{Eq1}
\end{equation}
where $H(t)\in  \Rset^{2N\times 2N}$  is symmetric and $P$-periodic i.e.
 $H(t+P)=H(t)=(H(t))^T$  and $J$ is skew-symmetric matrix of $\Rset^{2N\times 2N}$.
 The square matrix
  $X(t)$  with  columns  $x_1(t),x_2(t),...,x_{2N}(t)$ belonging  to
  fundamental  set  of solutions  of equation  (\ref{Eq1}),
   is called  a fundamental  matrix.
   Considering the following matrix system \cite[Vol. 1, chap. 2]{YS}

  \begin{equation}\label{Eq2}
\left\{\begin{array}{rcl}
    J\dfrac{dX(t)}{dt}&=&H(t)X(t),\; t\in \Rset_{+}\\
    & & \\
     X(0)&=&I
     \end{array}\right.
\end{equation}
whose matrix solution  $\left(X(t) \right)_{t\in \Rset_+}$ satisfies the relationship
$X(t+nP)=X(t)X^n(P),\;\forall\; t\in\, \Rset_{+}$  and $\forall\;n\in \Nset.$
 We have the following definition
\begin{definition}
The  matrix $X(t)$  satisfying  equation (\ref{Eq2})  is called  the matrizant
    of equation  (\ref{Eq1}). The value at the period  $P$  of the matrizant $X(t)$
    defined  by  the initial condition  $X(0)=I_{2N}$, is called
  the monodromy  matrix  and its eigenvalues  are the multipliers  of system (\ref{Eq1}).
  \end{definition}
  An important property of Hamiltonian system with periodic coefficients is that the
     matrizant $X(t)\in \Rset^{2N\times 2N}$  of $(\ref{Eq2})$ verifies the identity
     \begin{equation}\label{IdSymp}
     X(t)^TJX(t)=J \;,
     \end{equation}
     i.e. $X(t)$  is $J$-orthogonal or $J$-symplectic. These matrices were studied in
     \cite{Dos-Sad_13,dos-cou-sam_13,god_92,god_89,god-sad_01}. We recall
     that the spectrums of the symplectic matrices are generaly divided into three groups of
eigenvalues (see e.g \cite{Dos-Sad_13,god-sad_01}) :
$N_{\infty}$ eigenvalues outside the unite circle, $N_0=N_{\infty}$
eigenvalues inside the unite circle and $2(N-N_0)$ eigenvalues on
the unite circle.

Considering the symmetric matrix
 \cite{Dos-Sad_13,god_92,god_89,god-sad_01,god-sad_06}
$$
S_0=(1/2)\left((JW)+(JW)^T\right),
$$
where $W$  is a $J$-symplectic  matrix of $\Rset^{2N\times 2N}$  and $J$
a skew-symmetric matrix of  system  (\ref{Eq1}).
S.K. Godunov and  Sadkane in \cite{god-sad_01} gave a classification
of the  eigenvalues which lie on the unit circle  as follows

\begin{definition}\label{d1}
An eigenvalue $\rho$ of $W$ on the unit circle is an eigenvalue of
red color or $r$-eigenvalue (respectively an eigenvalue of green color or $g$-eigenvalue) if
$\left(S_0x,\,x\right)>0$ (respectively $\left(S_0x,\,x\right)<0$)
for any eigenvector $x$ associated with $\rho$. However if
$\left(S_0x,\,x\right)=0,$ then $\rho$ is of mixed color.
\end{definition}
 From this definition, we give the following theorem \cite{Dos-Sad_13}
\begin{thm}\label{t00}
The matrix $W$ is strongly stable if and  only if, one of
the following conditions is  verified
\begin{enumerate}
\item  $W$ has only $r$- and/or $g$- eigenvalues and  the quantity
\begin{equation}\label{dS}
\delta_{S}=min\{|e^{i\theta_l}-e^{i\theta_j}|:e^{i\theta_l},e^{i\theta_j}
\; \text{are\, $r$-\, and \,$g$-eigenvalues\, of\,
$W$}\}
\end{equation}
 should not be close to zero.
\item  $\Pset_r+\Pset_g=I,$ and $\Pset^T_rS_0\Pset_g=0,$
where $\Pset_r$ and $\Pset_g$ are the projectors associated respectively with
$r$-eigenvalues  and $g$-eigenvalues of $W,$ and
$$ S_r=\Pset^T_rS_0\Pset_r=S^T_r \text{ and\;} S_g=\Pset^T_gS_0\Pset_g=S^T_g.$$
 \item  the sequence of averaged  matrix
  $\left(S^{(n)}\right)_{n\ge 0}$  defined by
 ${\displaystyle S^{(n)}=\dfrac{1}{2^n}\sum_{k=1}^{2^n}(W^T)^{k-1}W^{k-1}}$   converges
  to a positive definite symmetric  constant  matrix  $S^{(\infty)}$ and the
 quantity  defined in (\ref{dS})
   is not close to  zero.
\end{enumerate}
\end{thm}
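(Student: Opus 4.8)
\medskip
\noindent\textbf{Proof strategy.}
The plan is to prove that each of the three conditions is equivalent to the strong stability of $W$, so that in particular they imply one another. Recall that $W$ is \emph{strongly stable} when it is stable --- all its eigenvalues (the multipliers) lie on the unit circle and the powers $W^n$ remain uniformly bounded, equivalently $W$ is diagonalisable with unimodular spectrum --- and, moreover, every $J$-symplectic matrix sufficiently close to $W$ is again stable. The tool running through the argument is the indefinite bilinear form $[x,y]:=(S_0x,y)$: since $S_0=\tfrac{1}{2}\bigl((JW)+(JW)^T\bigr)$ and $W^TJW=J$, the matrix $W$ preserves $[\cdot,\cdot]$, and by Definition~\ref{d1} the colour of a semisimple unimodular eigenvalue $\rho$ is exactly the sign of $[x,x]$ on the associated eigenvectors.

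I would begin with condition~1. For the ``easy'' direction, assume $W$ has only $r$- and $g$-eigenvalues and $\delta_S$ is bounded away from zero. Then the spectral decomposition of $\Rset^{2N}$ is $[\cdot,\cdot]$-orthogonal and splits into a subspace on which $[\cdot,\cdot]$ is positive definite (the span of the $r$-eigenspaces) and one on which it is negative definite (the span of the $g$-eigenspaces). A Krein-type argument shows that a $J$-symplectic operator possessing an invariant subspace on which $[\cdot,\cdot]$ is definite has, on that subspace, all its eigenvalues on the unit circle; and an eigenvalue of a real $J$-symplectic matrix can leave the unit circle only after two eigenvalues of opposite colour collide. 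When $\delta_S$ is not small, a sufficiently small symplectic perturbation triggers no such collision, so $W$ stays stable and is therefore strongly stable. Conversely, if $W$ has a defective or a mixed-colour unimodular eigenvalue, or two eigenvalues of opposite colour so close that $\delta_S$ is essentially zero, one exhibits an explicit small $J$-symplectic perturbation --- supported on the pertinent low-dimensional $[\cdot,\cdot]$-nondegenerate invariant subspace --- that pushes a pair of eigenvalues off the unit circle; analytic perturbation theory for the eigenvalues and continuity of the colour classification make this rigorous. Hence strong stability $\Longleftrightarrow$ condition~1.

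Condition~2 is the coordinate-free version of this picture. The identity $\Pset_r+\Pset_g=I$ says precisely that $\Rset^{2N}$ is spanned by the $r$- and $g$-eigenspaces, i.e.\ there is no eigenvalue off the unit circle and no defective or mixed-colour unimodular eigenvalue; $\Pset_r^TS_0\Pset_g=0$ together with the block splitting $S_0=S_r+S_g$, with $S_r=\Pset_r^TS_0\Pset_r$ positive definite and $S_g=\Pset_g^TS_0\Pset_g$ negative definite on the respective ranges, is exactly the statement that $[\cdot,\cdot]$ decomposes $W$-invariantly into a definite-positive and a definite-negative summand. This is the configuration of the previous paragraph, so condition~2 $\Longleftrightarrow$ condition~1; the clause ``$\delta_S$ not close to zero'' is what turns the qualitative equivalence into the quantitative, numerically meaningful one.

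Finally, condition~3. Splitting the defining sum into its two halves gives the doubling recursion
\[
S^{(n+1)}=\tfrac{1}{2}\Bigl(S^{(n)}+(W^{2^n})^{T}S^{(n)}W^{2^n}\Bigr),\qquad S^{(0)}=I,
\]
and a variant of the same manipulation yields $W^TS^{(n)}W-S^{(n)}=2^{-n}\bigl((W^{2^n})^TW^{2^n}-I\bigr)$. If $W$ is strongly stable it is power bounded, and since $W^{-1}=J^{-1}W^TJ$ its inverse is power bounded too, so $cI\le S^{(n)}\le c^{-1}I$ for some $c>0$; writing $W=TOT^{-1}$ with $O$ orthogonal and averaging $O^{-j}(T^TT)O^{j}$ over $j$, the off-diagonal geometric sums in an eigenbasis of $O$ disappear in Ces\`aro mean, so $S^{(n)}$ converges to a positive definite $S^{(\infty)}$; and $\delta_S$ is strictly positive for a strongly stable $W$. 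Conversely, if $S^{(n)}\to S^{(\infty)}>0$ then, passing to the limit in the second displayed identity, $W^TS^{(\infty)}W=S^{(\infty)}$, so $(S^{(\infty)})^{1/2}W(S^{(\infty)})^{-1/2}$ is orthogonal and $W$ is stable; together with $\delta_S$ not close to zero this yields strong stability by the argument of the first paragraph. Thus strong stability $\Longleftrightarrow$ condition~3, closing the chain of equivalences. I expect the two delicate points to be, first, the genuine convergence --- not merely boundedness --- of the averaged matrices $S^{(n)}$, which rests on the diagonalisability of $W$ on the unit circle and the Ces\`aro estimate above, and second, the perturbation step in condition~1: constructing, in the mixed-colour or near-collision case, an explicit small $J$-symplectic perturbation that provably ejects eigenvalues from the unit circle, and checking that the colour classification varies continuously under such perturbations.
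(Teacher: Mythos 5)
The paper does not prove this theorem: it is quoted verbatim from the cited reference \cite{Dos-Sad_13} (Dosso--Sadkane) and used as a black box, so there is no in-paper argument to compare yours against. Your reconstruction does follow the same circle of ideas as that source --- Krein--Gelfand--Lidskii colour theory for the unimodular spectrum, plus Godunov's dyadic averaging --- and the two identities you derive for $S^{(n)}$ (the doubling recursion and the telescoping identity $W^TS^{(n)}W-S^{(n)}=2^{-n}\bigl((W^{2^n})^TW^{2^n}-I\bigr)$) are correct and are exactly the engine of that theory. The Ces\`aro argument for convergence of $S^{(n)}$ when $W$ is diagonalisable with unimodular spectrum is also sound.

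Two points in your sketch are genuine gaps rather than routine details. First, the entire equivalence with condition~1 rests on the assertion that a unimodular eigenvalue of a symplectic matrix can leave the circle only through a collision of eigenvalues of opposite colour, and on the existence, in the mixed-colour or near-collision case, of a small structured perturbation that actually ejects eigenvalues. You invoke both but prove neither; the second in particular is the hard half of the Krein criterion and cannot be dispatched by ``analytic perturbation theory and continuity of the colour'' alone, since one must exhibit a perturbation \emph{within} the symplectic class. Second, in the converse of condition~3 you pass to the limit in the telescoping identity to get $W^TS^{(\infty)}W=S^{(\infty)}$, but the right-hand side $2^{-n}\bigl((W^{2^n})^TW^{2^n}-I\bigr)$ is not obviously null: a priori it only converges to some $L\ge 0$. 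The missing step is to note that $\det W=1$ for a symplectic matrix, so $\det\bigl(W^TS^{(\infty)}W\bigr)=\det S^{(\infty)}$, and a positive definite matrix dominated from below by another with the same determinant must equal it; this forces $L=0$. With those two repairs your outline matches the standard proof of the cited result.
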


   Regarding the strong stability  analysis  of the Hamiltonian system
     with periodic coefficients, we give the following theorem
     (see \cite{Dos-Sad_13,dos-cou_14,dos-cou-sam_13})

\begin{thm}\label{t1}
System (\ref{Eq2}) is strongly stable if and only if, one of the following
conditions is  verified
\begin{enumerate}
\item[1)] If there exists $\varepsilon>0$ such that any Hamiltonian
system with $P$-periodic coefficients of the form
\begin{equation}\label{Eq3}
    J\dfrac{dX(t)}{dt}=\widetilde{H}(t)X(t),\;
\end{equation}
and verifying
 ${\displaystyle
\|H-\widetilde{H}\|\equiv
\int_{0}^{P}|H(t)-\widetilde{H}(t)|dt<\varepsilon
}$  is stable.
\item[2)] The monodromy matrix $X(P)$ of system (\ref{Eq2}) is
strongly stable.
\end{enumerate}
\end{thm}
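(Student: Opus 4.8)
The plan is to prove Theorem \ref{t1} by first observing that condition 1) is essentially a restatement of the definition of strong stability, and then reducing the whole equivalence to a statement about the iterates of the monodromy matrix $X(P)$. Indeed, taking $\widetilde H = H$ in 1) forces the unperturbed system (\ref{Eq2}) to be stable, and the full content of 1) is then that this stability survives every $P$-periodic Hamiltonian perturbation with $\|H-\widetilde H\|<\varepsilon$ --- which is exactly what it means for (\ref{Eq2}) to be strongly stable. So ``system (\ref{Eq2}) is strongly stable'' $\Longleftrightarrow$ 1) is immediate, and the real work is the equivalence $1)\Longleftrightarrow 2)$, which I route through the monodromy matrix.

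Two preliminary facts carry most of the bookkeeping. First, from the identity $X(t+nP)=X(t)X(P)^n$ (valid for $\widetilde X$ as well, since $\widetilde H$ is $P$-periodic) together with the continuity --- hence boundedness --- of $t\mapsto X(t)$ on the compact interval $[0,P]$, the family $(X(t))_{t\ge 0}$ is bounded if and only if the sequence $(X(P)^n)_{n\ge 0}$ is bounded. Hence system (\ref{Eq2}) (resp.\ (\ref{Eq3})) is stable if and only if its monodromy matrix $X(P)$ (resp.\ $\widetilde X(P)$) has bounded powers. Second, a Gronwall estimate applied to the matrix equations (\ref{Eq2}) and (\ref{Eq3}) yields $\sup_{0\le t\le P}\|X(t)-\widetilde X(t)\|\le c\,\|H-\widetilde H\|$ with $c=c(H,P)$, so $\|\widetilde X(P)-X(P)\|$ is small when $\|H-\widetilde H\|$ is small; moreover $\widetilde X(P)$ is again symplectic, since the identity (\ref{IdSymp}) holds for $\widetilde X$ as well ($\widetilde H$ being symmetric).

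The heart of the argument is the converse, realization direction: \emph{every} symplectic matrix $B$ with $\|B-X(P)\|$ small enough is the monodromy matrix of some system (\ref{Eq3}) with $\|H-\widetilde H\|$ as small as we please. To build it, write $BX(P)^{-1}=e^{A}$ with $A$ a small Hamiltonian matrix ($JA$ symmetric); this is legitimate because the exponential map is a local diffeomorphism from the space of Hamiltonian matrices onto a neighbourhood of $I$ in the symplectic group. Set $\widetilde X(t)=X(t)\,e^{tA/P}$ and read off the associated coefficient $\widetilde H(t)=H(t)+\tfrac1P\,JX(t)AX(t)^{-1}$. Using (\ref{IdSymp}) for $X(t)$ and $J^{T}=-J$ one checks that $JX(t)AX(t)^{-1}$ is symmetric, so $\widetilde H(t)$ is symmetric; it is $O(\|A\|)$-close to $H(t)$; and $P$-periodicity is secured by choosing, among the destabilising $B$'s near $X(P)$, one for which $A$ commutes with $X(P)$ (or, failing that, by localising the perturbation to a subinterval of $[0,P]$). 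Producing a genuinely $P$-periodic, symmetric $\widetilde H$ close to $H$ whose monodromy matrix is a prescribed nearby symplectic matrix is the main obstacle; the rest of this step is routine.

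With these in hand the two implications are short. For $2)\Rightarrow 1)$: if the monodromy matrix $X(P)$ is strongly stable, then by Theorem \ref{t00} (and Definition \ref{d1}) there is $\delta>0$ such that every symplectic $B$ with $\|B-X(P)\|<\delta$ has bounded powers; choosing $\varepsilon$ with $c\varepsilon<\delta$, any $\widetilde H$ with $\|H-\widetilde H\|<\varepsilon$ gives $\widetilde X(P)$ with bounded powers, hence a stable system (\ref{Eq3}), which is 1). For $1)\Rightarrow 2)$ I argue by contraposition. If $X(P)$ is not even Lyapunov-stable, then system (\ref{Eq2}) is unstable and 1) fails already with $\widetilde H=H$. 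If $X(P)$ is stable but not strongly stable, then arbitrarily small symplectic perturbations of it acquire an eigenvalue off the unit circle --- an open property of symplectic matrices, by the classical collision-of-multipliers picture underlying Theorem \ref{t00} --- and by the realization step such a perturbation is the monodromy matrix of a system (\ref{Eq3}) with $\|H-\widetilde H\|$ arbitrarily small, which is therefore unstable; hence no $\varepsilon>0$ works in 1). Combining the two implications with the initial observation establishes all the equivalences claimed in Theorem \ref{t1}.
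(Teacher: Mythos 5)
First, a point of reference: the paper itself offers no proof of Theorem \ref{t1} --- it is stated as a known result imported from \cite{Dos-Sad_13,dos-cou_14,dos-cou-sam_13} (the classical Gelfand--Lidskii/Krein characterisation of strong stability), so there is no in-paper argument to measure yours against. Judged on its own terms, your outline follows the standard route: condition 1) is essentially the definition of strong stability of the system; stability of (\ref{Eq2}) or (\ref{Eq3}) is equivalent to boundedness of the powers of the corresponding monodromy matrix via $X(t+nP)=X(t)X(P)^n$; a Gronwall estimate gives $\|\widetilde X(P)-X(P)\|\le c\,\|H-\widetilde H\|$ with $\widetilde X(P)$ symplectic; and the direction $2)\Rightarrow 1)$ then follows directly from the definition of strong stability of the symplectic matrix $X(P)$. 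That half is complete.

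The gap is the realization lemma needed for $1)\Rightarrow 2)$, which you correctly flag as ``the main obstacle'' but do not close. Your construction $\widetilde X(t)=X(t)e^{tA/P}$ yields $\widetilde H(t)=H(t)+\tfrac{1}{P}JX(t)AX(t)^{-1}$, which is indeed symmetric, but it is $P$-periodic only when $X(P)AX(P)^{-1}=A$; there is no reason a destabilising direction $A$ commuting with $X(P)$ should exist (if $X(P)$ has simple spectrum its commutant is spanned by its own powers), so your first escape route fails. Your second one --- localising the perturbation --- does work, but for a reason you never invoke and which is precisely why the theorem is stated with the $L^1$ norm $\int_0^P|H-\widetilde H|\,dt$: writing the unstable nearby matrix as $B=e^{A}X(P)$ with $A$ Hamiltonian and small, set $\widetilde X(t)=X(t)$ on $[0,P-\delta]$ and $\widetilde X(t)=e^{sA}X(t)$ with $s=(t-(P-\delta))/\delta$ on $[P-\delta,P]$, so that $\widetilde H(t)=\tfrac{1}{\delta}JA+(e^{-sA})^{T}H(t)e^{-sA}$ there. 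This $\widetilde H$ is symmetric, need not be continuous at $t=P-\delta$ nor match $H(0)$ at $t=P$ (only measurability and periodic extension are required), and satisfies $\int_0^P|H-\widetilde H|\,dt\le \|JA\|+O(\|A\|)$ uniformly in $\delta$, since the large factor $\tfrac{1}{\delta}$ is integrated over an interval of length $\delta$. Without carrying out this step (or an inverse-function-theorem argument showing the monodromy map $S\mapsto \widetilde X_{H+S}(P)$ is locally onto a neighbourhood of $X(P)$ in the symplectic group), the implication $1)\Rightarrow 2)$ is not established. A minor further imprecision: a stable but not strongly stable $X(P)$ may lose stability through a non-semisimple unimodular eigenvalue rather than through an eigenvalue leaving the unit circle; this is harmless since your argument only uses unboundedness of the powers.
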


Thus the analysis of the strong stability of a Hamiltonian system with
periodic coefficients
is linked to the stability of any small perturbation of the
 system preserving its structure.
 Which leads us to study the perturbation  of these type of system.
 In this paper, we are interesting  in a type of perturbation called
  perturbation of rank $k\ge  2$ of Hamiltonian system with periodic coefficients.
  A  study  of   rank one perturbations was made in \cite{DAK_16}
  from a study of  rank one perturbation of symplectic in \cite{Meh,Meh1}.
  In  our study, we use matrices whose columns generate Lagrangian invariant subspaces.
  Thus  to understand the  rank $k\ge 2$ perturbation theory of
  Hamiltonian systems with periodic coefficients,
  we give some basic properties of the isotropic subspaces in  section \ref{Sec1}.
   in section  \ref{Sec2} the theory of rank $k\ge 2$ perturbations of symplectic matrices is proposed.
 Section \ref{Sec3} explains the concept of rank $k$ perturbation of Hamiltonian systems with periodic coefficients.
  In section \ref{Sec4}, we analyse the Jordan canonical form of  matrizant of rank $k$ perturbation
  of \ref{Eq2}.
     In section \ref{Sec5},  We give some numerical examples which illustrate our theoretical results.
      Finally, we make some concluding remarks in section \ref{Sec6}.\\
      Throughout the paper, we use the following notation:
       The identity and zero matrices of order $k$ are respectively denoted by $I_k,\;\text{and\;} 0_k$
       or just $I$ and $0$ when the order is clear from the context. And by the symbols $\|A\|$
        and $U^T$ we denote the $2$-norm of the matrix $A$ and the transposed matrix (or vector) $U$ respectively.

\section{Some basic notions on some types of subspaces}\label{Sec1}
Start by basic notions on the Lagragian and isotropic subspaces.
\subsection{Lagrangian    subspaces}

These subspaces  are defined as follow \cite{Meh1}
   \begin{definition}\label{d0001}
   Let $J\in \Rset^{2N\times 2N}$  be either skew-symmetric and
    invertible (or in the complex case  only, Hermitian and invertible, respectively).\\
   A  subspace $\mathcal{L}$ of $\Cset^{2N}$ is called $J$-Lagrangian
  if it has the dimension $N$ and
$$
<Jx,y>=0,\quad \forall\, x,y\in \mathcal{L}.
$$
or in the case   $J$  Hermitian   if  $<x,y>_{*}=0,\quad \forall \; x,y\in \mathcal{L}$
where  the standard bilinear  and sesquilinear  forms  are defined as follow
\begin{align*}
<x,y>=\sum_{j=1}^{2N}x_jy_j,\quad x=[x_1,\ldots,x_{2N}]^T,\;y=[y_1,\ldots,y_{2N}]^T\in \Rset^{2N},\\
<x,y>_{*}=\sum_{j=1}^{2N}x_j\overline{y_j},\quad x=[x_1,\ldots,x_{2N}]^T,\;y=[y_1,\ldots,y_{2N}]^T\in \Cset^{2N},
\end{align*}
\end{definition}

Specially, a subspace $\mathcal{L}$ is called Lagrangian subspace if
and only if there exists a matrix $L$ whose columns generating $\mathcal{L}$ satisfies
$rank(L)=N$ and $L^{\star}J_{2N}L=0.$

Consider the following definition
\begin{definition}
A matrix $\mathcal{H}\in \Cset^{2N\times 2N}$  is called Hamiltonian  if
$J\mathcal{H}=\left(J\mathcal{H}\right)^{*}$  is Hermitian, where $J=\left[\begin{array}{cc}
0& I_N\\-I_N & 0\end{array}\right]$  and the superscript  $*$  denotes the conjugate  transpose.
\end{definition}
The following lemma
 gives the link between the $J$-symplectic  and the $J$-Hamiltonian matrices
  via the Caley transform, see e.g., \cite{goh-lan-rod_05,lan-rod_95,Meh1}
  $$
  \mathcal{C}_1(M)=(I-M)^{-1}(I+M),\quad \mathcal{C}_{-1}(N)=(I+N)^{-1}(I-N)
  $$
  for $M,N\in \Rset^{2N\times 2N}$  with $1$ and $-1$ not belonging  to the spectrum of  $M$  and
  $N$  respectively.
\begin{lem}[Caley  Transform]  Let  $W\in \Rset^{2N\times 2N}$  be $J$-symplectic.
\begin{enumerate}
  \item[(i)] If $W$ has not of eigenvalues 1, then the matrix $A=\mathcal{C}_1(W)=(I-W)^{-1}(I+W)$
   is $J$-Hamiltonian  and $\pm 1$  are not eigenvalues of $A$. Moreover, we  have
      $$
      W=\mathcal{C}_1^{-1}(A)=(A-I)(A+I)^{-1}.
      $$
  \item[(ii)] If   $W$   has not of eigenvalues $-1$,  then the matrix  $B=\mathcal{C}_{-1}(W)=
  (I+W)^{-1}(I-W)$  is $J$-Hamiltonian  and $\pm 1$  are  not  eigenvalues  of $B$. Moreover, we have
  $$
  W=\mathcal{C}_{-1}^{-1}(B)=(I-B)(B+I)^{-1}.
  $$
\end{enumerate}

\end{lem}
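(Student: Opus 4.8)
The plan is to prove statement (i) in full and then obtain (ii) from it by the substitution $W\mapsto -W$. Throughout the computation every matrix that occurs is a rational function of the single matrix $W$, so all of them commute with one another; I would record this fact once and then use it freely, which reduces the whole argument to elementary manipulations.

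For (i): since $1$ is not an eigenvalue of $W$, the matrix $I-W$ is invertible and $A=(I-W)^{-1}(I+W)$ is well defined. To check that $A$ is $J$-Hamiltonian, i.e. that $JA+A^{T}J=0$ (equivalently, since $J^{T}=-J$, that $JA$ is symmetric), I would rewrite the symplectic identity $W^{T}JW=J$ as $W^{T}J=JW^{-1}$. This gives $(I-W^{T})J=J(I-W^{-1})$, hence $(I-W^{T})^{-1}J=J(I-W^{-1})^{-1}$, and likewise $(I+W^{T})J=J(I+W^{-1})$. Therefore
\[
A^{T}J=(I+W^{T})(I-W^{T})^{-1}J=(I+W^{T})J(I-W^{-1})^{-1}=J(I+W^{-1})(I-W^{-1})^{-1}.
\]
Writing $I\pm W^{-1}=(W\pm I)W^{-1}$ and using commutativity, $(I+W^{-1})(I-W^{-1})^{-1}=(W+I)(W-I)^{-1}=-(I-W)^{-1}(I+W)=-A$, so $A^{T}J=-JA$, which is exactly the Hamiltonian condition.

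Next I would read off the spectrum of $A$ and invert the transform. From
\[
A+I=(I-W)^{-1}\left[(I+W)+(I-W)\right]=2(I-W)^{-1},\qquad A-I=(I-W)^{-1}\left[(I+W)-(I-W)\right]=2(I-W)^{-1}W,
\]
both $A+I$ and $A-I$ are products of invertible matrices (using that $W$ is invertible, being $J$-symplectic), so neither $1$ nor $-1$ is an eigenvalue of $A$. Combining the two identities, $(A-I)(A+I)^{-1}=2(I-W)^{-1}W\cdot\frac{1}{2}(I-W)=(I-W)^{-1}W(I-W)=W$, the last step because $W$ commutes with $I-W$; this is $W=\mathcal{C}_{1}^{-1}(A)=(A-I)(A+I)^{-1}$.

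For (ii) I would apply (i) to $\widetilde{W}:=-W$, which is again $J$-symplectic and has no eigenvalue $1$ exactly when $W$ has no eigenvalue $-1$; then $\mathcal{C}_{1}(\widetilde{W})=(I-\widetilde{W})^{-1}(I+\widetilde{W})=(I+W)^{-1}(I-W)=B$, so part (i) tells us that $B$ is $J$-Hamiltonian with $\pm 1$ not eigenvalues and $\widetilde{W}=(B-I)(B+I)^{-1}$, whence $W=-(B-I)(B+I)^{-1}=(I-B)(B+I)^{-1}$. I do not expect a genuine obstacle here: the one place that demands care is transporting $J$ through $(I-W^{T})^{-1}$ via $W^{T}JW=J$, and the commutativity remark is exactly what keeps that step (together with the final recovery of $W$) routine.
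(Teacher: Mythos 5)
Your proof is correct and complete. The paper itself does not prove this lemma --- it is quoted from the literature (the references to Gohberg--Lancaster--Rodman, Lancaster--Rodman, and Mehl et al.\ given just above the statement) --- so there is no in-paper argument to compare against; your verification is the standard one and it is carried out without error. The key step, transporting $J$ past $(I-W^{T})^{-1}$ via $W^{T}J=JW^{-1}$, is exactly right, and the identities $A+I=2(I-W)^{-1}$, $A-I=2(I-W)^{-1}W$ cleanly give both the exclusion of $\pm1$ from $\sigma(A)$ (using the invertibility of $W$, which follows from $W^{T}JW=J$) and the inversion formula. One small caveat on your opening remark: $W^{T}$ is \emph{not} a rational function of $W$, so "everything commutes" should be read as applying separately to the family generated by $W$ and the family generated by $W^{T}$ --- which is in fact all you use, since the two families only interact through the relation $W^{T}J=JW^{-1}$. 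The reduction of (ii) to (i) via $W\mapsto -W$ is also valid, since $-W$ is again $J$-symplectic; this is slightly slicker than repeating the computation, which is what the cited sources do.
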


The following proposition gives us a relations between
 the Lagrangian subspaces and the symplectic matrices (see in \cite{fre-meh-xu_02,Meh1})
\begin{prop}\label{pa01}
\begin{enumerate}
\item
 Let $ W\in\Cset^{2N\times 2N}$ be a symplectic matrix. Then the columns of $W\left[
\begin{array}{c}
I_N\\
0_N
\end{array}
\right]$ span a Lagrangian subspace. Moreover, if the columns
of a  matrix  $L\in\Cset^{2N\times N}$ span a Lagrangian subspace,
 then there exists a symplectic matrix
$\widetilde{W}$ such that $range\left(\widetilde{W}\left[\begin{array}{l}
I_N\\
0_N
\end{array}
\right]\right)=range(L).$
\item
 Let $\mathcal{H}\in \Cset^{2N\times 2N}$  be a Hamiltonian  matrix.
  There  exists  a Lagragian  invariant  subspace $\mathcal{L}$  of
  $\mathcal{H}$  if and only  if there exists  a symplectic matrix $W$
  such  that range $\left(W\left[\begin{array}{c}I_N\\0\end{array}\right]\right)=\mathcal{L}$  and
  we have the Hamiltonian block triangular form
  $$
  W^{-1}\mathcal{H}W=\left[\begin{array}{cc}
  R & D\\0 & -R^*\end{array}\right].
  $$
\end{enumerate}
\end{prop}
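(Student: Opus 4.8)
The plan is to prove the two items in turn, the substance being the second (``extension'') assertion of item~1. Throughout, write $E_1=\left[\begin{smallmatrix}I_N\\ 0_N\end{smallmatrix}\right]\in\Cset^{2N\times N}$ and recall that a symplectic matrix satisfies $W^{*}JW=J$, hence $W$ is nonsingular and $JW^{-1}=W^{*}J$. For the first assertion of item~1, a direct block computation gives $E_1^{*}JE_1=0_N$, and therefore $(WE_1)^{*}J(WE_1)=E_1^{*}(W^{*}JW)E_1=E_1^{*}JE_1=0_N$, while $WE_1$ has rank $N$ because $W$ is invertible; by the characterization of Lagrangian subspaces recalled just after Definition~\ref{d0001}, the columns of $WE_1$ span a Lagrangian subspace.

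For the converse of item~1, given $L\in\Cset^{2N\times N}$ with $\mathrm{rank}(L)=N$ and $L^{*}JL=0$, I would construct a symplectic matrix of the form $\tW=[\,L\ M\,]$. Expanding $\tW^{*}J\tW=J$ into $N\times N$ blocks shows this is equivalent to the three relations $L^{*}JL=0$ (already assumed), $L^{*}JM=I_N$ and $M^{*}JM=0$ (the fourth block $M^{*}JL=-I_N$ then follows from the skew-symmetry of $J$). Since $J$ is nonsingular and the columns of $L$ are independent, the map $x\mapsto L^{*}Jx$ is onto $\Cset^{N}$, so one may first pick $M_0$ with $L^{*}JM_0=I_N$. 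The matrix $S:=M_0^{*}JM_0$ is skew-Hermitian (because $J^{*}=-J$), and replacing $M_0$ by $M:=M_0+\tfrac12\,LS$ one checks, using $L^{*}JL=0$, that $L^{*}JM=I_N$ is preserved and $M^{*}JM=S-\tfrac12 S+\tfrac12 S^{*}=\tfrac12(S+S^{*})=0$. Then $\tW=[\,L\ M\,]$ is symplectic and $\mathrm{range}(\tW E_1)=\mathrm{range}(L)$, which is the claim.

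For item~2, I would first note that conjugation by a symplectic matrix preserves the Hamiltonian structure: if $J\mathcal{H}=(J\mathcal{H})^{*}$, then, using $JW^{-1}=W^{*}J$, one has $J(W^{-1}\mathcal{H}W)=W^{*}(J\mathcal{H})W$, which is again Hermitian. Now suppose $\mathcal{L}$ is a Lagrangian $\mathcal{H}$-invariant subspace and, by item~1, choose a symplectic $W$ with $\mathrm{range}(WE_1)=\mathcal{L}$. Invariance yields $\mathcal{H}(WE_1)=(WE_1)R$ for a unique $R\in\Cset^{N\times N}$, so $\widehat{\mathcal{H}}:=W^{-1}\mathcal{H}W$ satisfies $\widehat{\mathcal{H}}E_1=E_1R$, i.e. $\widehat{\mathcal{H}}=\left[\begin{smallmatrix}R & D\\ 0 & G\end{smallmatrix}\right]$ for some blocks $D,G$. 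Imposing that $J\widehat{\mathcal{H}}=\left[\begin{smallmatrix}0 & G\\ -R & -D\end{smallmatrix}\right]$ be Hermitian, block by block, forces $G=-R^{*}$ and $D=D^{*}$, which is the stated form. Conversely, if $W^{-1}\mathcal{H}W=\left[\begin{smallmatrix}R & D\\ 0 & -R^{*}\end{smallmatrix}\right]$, the vanishing lower-left block shows that $\mathrm{range}(E_1)$ is invariant under $W^{-1}\mathcal{H}W$, hence $\mathcal{L}:=\mathrm{range}(WE_1)$ is $\mathcal{H}$-invariant, and it is Lagrangian by item~1.

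The step I expect to be the main obstacle is the explicit correction producing $M$ in item~1: one must handle simultaneously the skew-symmetry of $J$ and the various conjugate transposes so that all three bilinear identities hold at once (equivalently, that $\mathrm{range}(L)$ admits a Lagrangian complement realized by a dual basis, a symplectic Gram--Schmidt / Witt-type extension). Once the symplectic matrix $\tW$ is available, the rest of the argument is routine $2\times 2$ block-matrix bookkeeping, so I would organize the write-up so that item~1 is fully settled before turning to item~2.
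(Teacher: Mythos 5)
Your proof is correct. Note, however, that the paper itself gives no proof of this proposition: it is quoted from the literature (the references \cite{fre-meh-xu_02,Meh1}), so there is nothing in the text to compare your argument against line by line. What you supply is the standard self-contained argument: the first claim of item~1 is the block computation $E_1^{*}JE_1=0$ combined with invertibility of $W$; the converse is a symplectic (Witt-type) basis extension, where your correction $M=M_0+\tfrac12 LS$ with $S=M_0^{*}JM_0$ skew-Hermitian cleanly kills the $M^{*}JM$ block while preserving $L^{*}JM=I_N$ --- I checked the computation $M^{*}JM=S-\tfrac12S+\tfrac12S^{*}=0$ and it is right, using $M_0^{*}JL=-(L^{*}JM_0)^{*}=-I_N$. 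Item~2 then follows by the block bookkeeping you describe: symplectic similarity preserves the Hamiltonian structure via $JW^{-1}=W^{*}J$, invariance of $\mathcal{L}$ forces the zero lower-left block, and Hermiticity of $J\widehat{\mathcal{H}}$ forces $G=-R^{*}$ and $D=D^{*}$. This matches the approach taken in the cited sources, so your write-up would serve as a legitimate proof where the paper only offers a citation. One small presentational caveat: state explicitly that you are working with the standard $J=\left[\begin{smallmatrix}0&I_N\\-I_N&0\end{smallmatrix}\right]$ and the sesquilinear convention $W^{*}JW=J$, since the paper wavers between $T$ and $*$ and between general skew-symmetric $J$ and the canonical one; your blocks $E_1^{*}JE_1=0$ and $J\widehat{\mathcal{H}}=\left[\begin{smallmatrix}0&G\\-R&-D\end{smallmatrix}\right]$ depend on that normalization.
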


\subsection{Isotropic   subspaces}
The isotropic subspaces of certain types of matrices are usually of interest in applications \cite{kres_05,Wat07}.
\begin{definition}
A subspace $\mathcal{X}\subseteq\Rset^{2N}$ is called isotropic if $\mathcal{X}\perp J_{2N}\mathcal{X}$.
A maximal isotropic subspace is called Lagrangian.
\end{definition}

We collect some properties on the isotropic subspaces in the theorem below

\begin{prop}\label{THI}
\begin{enumerate}
\item Let $\mathcal{X}$  be an isotropic subspace.
 Then the dimension of  $\mathcal{X}$  is less than  or equal to  $N$.
  \item  All isotropic  subspace  is contained in a Lagrangian subspaces.
  \item Let $S=[S_1\; S_2]\in \Rset^{2N\times 2N}$ be a symplectic matrix  with
   $S_i\in \Rset^{2N\times N},\;i=1,2$ ;  then the columns of $S_1$  and $S_2$
    span isotropic subspaces.
\end{enumerate}
\end{prop}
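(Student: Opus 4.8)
The plan is to work throughout with the nondegenerate bilinear form $\omega(x,y):=\langle Jx,y\rangle=x^{T}Jy$ on $\Rset^{2N}$, which is nondegenerate precisely because $J$ is skew-symmetric and invertible. For any subspace $\mathcal{Y}\subseteq\Rset^{2N}$ I write $\mathcal{Y}^{\omega}:=\{z\in\Rset^{2N}:\omega(z,y)=0\ \text{for all}\ y\in\mathcal{Y}\}$; nondegeneracy of $\omega$ gives the dimension formula $\dim\mathcal{Y}+\dim\mathcal{Y}^{\omega}=2N$. In this language, $\mathcal{X}$ is isotropic exactly when $\mathcal{X}\subseteq\mathcal{X}^{\omega}$. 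Item~1 is then immediate: $\dim\mathcal{X}\le\dim\mathcal{X}^{\omega}=2N-\dim\mathcal{X}$, hence $\dim\mathcal{X}\le N$.

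For item~2 I would argue by induction on the codimension $N-\dim\mathcal{X}$. If $\dim\mathcal{X}=N$, then $\mathcal{X}$ is already maximal isotropic and there is nothing to prove. If $m:=\dim\mathcal{X}<N$, then $\dim\mathcal{X}^{\omega}=2N-m>m$, so one may choose $v\in\mathcal{X}^{\omega}\setminus\mathcal{X}$. Since $\omega(v,v)=0$ automatically (skew-symmetry) and $\omega(v,x)=0$ for every $x\in\mathcal{X}$ by the choice of $v$, the subspace $\mathcal{X}\oplus\mathrm{span}\{v\}$ is again isotropic, now of dimension $m+1$. Repeating this step until the dimension reaches $N$ produces a Lagrangian subspace containing $\mathcal{X}$. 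I expect this extension step — keeping isotropy while strictly raising the dimension — to be the only point needing genuine care; the remainder is bookkeeping.

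For item~3 I would expand the defining symplectic identity $S^{T}JS=J$ in $N\times N$ blocks. Writing $S=[S_{1}\ S_{2}]$ with $S_{i}\in\Rset^{2N\times N}$ and $J$ in its standard block form,
$$
S^{T}JS=\begin{bmatrix} S_{1}^{T}JS_{1} & S_{1}^{T}JS_{2}\\ S_{2}^{T}JS_{1} & S_{2}^{T}JS_{2}\end{bmatrix}=\begin{bmatrix}0 & I_{N}\\ -I_{N} & 0\end{bmatrix},
$$
so that $S_{1}^{T}JS_{1}=0$ and $S_{2}^{T}JS_{2}=0$. Reading these identities entrywise, $\omega$ vanishes on every pair of columns of $S_{1}$, and likewise for $S_{2}$; hence $\mathrm{range}(S_{1})$ and $\mathrm{range}(S_{2})$ are isotropic. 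Because $S$ is invertible, each $S_{i}$ has full column rank $N$, so these subspaces are in fact Lagrangian, which also matches the bound of item~1.
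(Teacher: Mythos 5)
Your argument is correct. Note first that the paper itself offers no proof of this proposition; it is stated as a collection of known facts with pointers to the literature on isotropic subspaces, so there is nothing internal to compare against --- your write-up simply supplies the standard argument in full. Item~1 via $\dim\mathcal{X}+\dim\mathcal{X}^{\omega}=2N$ and $\mathcal{X}\subseteq\mathcal{X}^{\omega}$ is exactly right, and the extension step in item~2 is sound: for $v\in\mathcal{X}^{\omega}\setminus\mathcal{X}$ one has $\omega(v,v)=0$ by skew-symmetry and $\omega(v,x)=0$ for all $x\in\mathcal{X}$, so $\mathcal{X}\oplus\mathrm{span}\{v\}$ is isotropic of dimension $m+1$; iterating reaches dimension $N$, which is maximal by item~1, hence Lagrangian in the sense of the paper's definition.

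One small point worth making explicit in item~3: your block computation silently assumes that $J$ is in the standard form $\left[\begin{smallmatrix}0&I_N\\-I_N&0\end{smallmatrix}\right]$ (or at least has vanishing $N\times N$ diagonal blocks), since otherwise $S_1^{T}JS_1$ equals the $(1,1)$ block of $J$, which need not be zero --- take $S=I$ with a generic skew-symmetric invertible $J$ to see the statement fail. The paper works with a general skew-symmetric invertible $J$ in places, but the standard block form is what is intended here (it is the $J$ used in the definition of Hamiltonian matrices and in Proposition~\ref{pa01}), so your reading is the right one; it would just be cleaner to say so. Your closing observation that $\mathrm{range}(S_1)$ and $\mathrm{range}(S_2)$ are in fact Lagrangian, not merely isotropic, is a correct strengthening of the stated claim.
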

Recall us two usefull lemmas on the isotropic subspace \cite{kres_05}
\begin{lem}
 Let $\mathcal{X}_S\subseteq \Rset^{2N}$  be a subspace  that is invariant under  a Hamiltonian matrix $S$
 which has all its eigenvalues associated with $\mathcal{X}_S$  satisfying $\mathcal{R}(\lambda)<0$.  Then
 $\mathcal{X}_S$  is isotropic.
\end{lem}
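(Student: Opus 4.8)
The plan is to reduce the isotropy of $\mathcal{X}_S$ to the uniqueness of the solution of a homogeneous Lyapunov equation whose coefficient matrix has its spectrum in the open left half-plane. First I would choose a matrix $V\in\Rset^{2N\times m}$ whose columns form a basis of $\mathcal{X}_S$, where $m=\dim\mathcal{X}_S$. Since $\mathcal{X}_S$ is invariant under $S$, there is $A\in\Rset^{m\times m}$ with $SV=VA$, and the eigenvalues of $S$ associated with $\mathcal{X}_S$ are precisely the eigenvalues of $A$; hence $\sigma(A)\subseteq\{\lambda:\ \mathcal{R}(\lambda)<0\}$. The auxiliary object is the matrix $M=V^TJV\in\Rset^{m\times m}$. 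Because $J^T=-J$ one has $M^T=-M$, and for $x=Vu$ and $y=Vv$ in $\mathcal{X}_S$ we get $\langle Jx,y\rangle=v^TV^TJVu=v^TMu$. Consequently $\mathcal{X}_S\perp J_{2N}\mathcal{X}_S$ holds if and only if $M=0$, so it suffices to prove that.

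Next I would bring in the Hamiltonian structure of $S$. In the real setting its defining identity reads $S^TJ+JS=0$. Combining this with $SV=VA$ gives
\begin{align*}
A^TM+MA &= A^TV^TJV+V^TJVA = (SV)^TJV+V^TJ(SV)\\
&= V^T\bigl(S^TJ+JS\bigr)V = 0 ,
\end{align*}
so $M$ solves the homogeneous Lyapunov equation $A^TM+MA=0$.

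To conclude I would invoke the classical fact that the linear map $X\mapsto A^TX+XA$ is invertible exactly when $\sigma(A^T)\cap\sigma(-A)=\emptyset$. Here $\sigma(A^T)=\sigma(A)$ sits in the open left half-plane while $\sigma(-A)$ sits in the open right half-plane, so the two spectra are disjoint and the map is injective; since $M$ is a solution of the homogeneous equation, $M=0$, which is exactly the isotropy of $\mathcal{X}_S$. As a sanity check, when $S$ is diagonalizable on $\mathcal{X}_S$ the conclusion follows at once: for eigenvectors $Sx=\lambda x$, $Sy=\mu y$ the identity $S^TJ+JS=0$ yields $(\lambda+\mu)\langle Jx,y\rangle=0$, and $\lambda+\mu\neq 0$ since both real parts are negative; the Lyapunov-equation step is merely the uniform way to dispense with the diagonalizability assumption. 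I do not expect a genuine obstacle here — the only points needing care are bookkeeping: using the correct real form of the Hamiltonian identity, and applying the spectral separation hypothesis to the right spectra $\sigma(A^T)$ and $\sigma(-A)$ in the invertibility criterion for the Lyapunov operator.
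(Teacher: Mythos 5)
Your argument is correct: the reduction to $M=V^TJV$ satisfying the homogeneous Sylvester equation $A^TM+MA=0$, together with the spectral separation $\sigma(A^T)\cap\sigma(-A)=\emptyset$ forced by $\mathcal{R}(\lambda)<0$, is exactly the standard proof of this fact. The paper itself gives no proof — it merely recalls the lemma from Kressner's work on isotropic invariant subspaces — and your write-up supplies precisely the argument used there, so there is nothing to correct.
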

The below  lemma gives a link between the invariant isotropic subspaces  and the existence of
the  orthogonal symplectic matrices  i.e. the matrix $U$  which has the representation
 $U=\left(\begin{array}{cc}
   U_1 & U_2 \\
    -U_2 & U_1
     \end{array}\right),\;\; U_1,U_2\in \Rset^{n,n}$ \cite{kres_05}.

\begin{lem}
Let $S\in \Rset^{2n\times 2n}$  be a skew-Hamiltonian matrix  and
$X\in \Rset^{2n\times k}(k\leq n)$  with orthogonal  columns. Then
the columns  of $X$  span  an isotropic invariant  subspace  of $S$  if and only if
there exists  an orthogonal  symplectic  matrix
$U=[X,Z,J^TX,J^TZ]$  with  some  $Z\in \Rset^{2n\times (n-k)}$  so that
$$
U^TSU=\left.\begin{array}{cc}
              &  \left.\begin{array}{cccc}
                         k  & n-k & k &  n-k
                       \end{array}\right. \\
            \left.\begin{array}{c}
                    k \\
                    n-k \\
                    k \\
                    n-k
                  \end{array}\right.  &\left[\begin{array}{cccc}
                       A_{11} & A_{12} & G_{11} & G_{12} \\
                       0 & A_{22} & -G_{12}^T & G_{22} \\
                       0 & 0 & A_{11}^T & 0 \\
                       0 & H_{22} & A_{12}^T & A_{22}^T
                     \end{array}\right]
            \end{array}\right.
$$
\end{lem}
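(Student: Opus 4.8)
The plan is to establish the ``if and only if'' by producing the orthogonal symplectic block form from an isotropic invariant subspace, and conversely reading off invariance and isotropy from that form. First I would prove the reverse implication, since it is the easy half: if the columns of $X$ form the first block of an orthogonal symplectic $U=[X,Z,J^TX,J^TZ]$ bringing $S$ to the displayed block upper-triangular shape, then the equation $SU=UB$ (with $B$ the $4\times4$ block matrix) restricted to the first block column gives $SX=X A_{11}+Z\cdot 0+J^TX\cdot 0+J^TZ\cdot 0=XA_{11}$, so $\range X$ is invariant; and orthogonality of $U$ together with $U=[X,Z,J^TX,J^TZ]$ forces $X^TJ^TX=0$ (the $(1,3)$ Gram block), which is exactly the statement that the columns of $X$ span an isotropic subspace.

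For the forward implication I would argue in three steps. Step one: given that $X$ has orthonormal columns spanning an isotropic invariant subspace of $S$, use Proposition \ref{THI}(2) (every isotropic subspace is contained in a Lagrangian one) together with the structure of orthogonal symplectic matrices to extend $X$ to an orthogonal symplectic basis. Concretely, pick $Z\in\Rset^{2n\times(n-k)}$ so that $[X\ Z]$ has orthonormal columns spanning a Lagrangian subspace $\mathcal{L}\supseteq\range X$; then the matrix $U=[X,Z,J^TX,J^TZ]$ is orthogonal (because $\mathcal{L}$ Lagrangian means $[X\ Z]^TJ^T[X\ Z]=0$, and $J^T$ is itself orthogonal, so the four blocks are mutually orthonormal) and symplectic by construction ($J$ maps the basis as required). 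Step two: compute $U^TSU$ and use invariance of $\range X$ to kill the first block column below the $(1,1)$ entry, i.e.\ $SX=XA_{11}$ gives zeros in positions $(2,1),(3,1),(4,1)$. Step three: exploit that $S$ is skew-Hamiltonian, i.e.\ $(JS)^T=-JS$, equivalently $S^TJ=-JS$, hence $U^TSU$ is skew-Hamiltonian with respect to the standard $J$ in the permuted ordering; the skew-Hamiltonian structure is precisely what forces the $(3,3)$ block to equal $A_{11}^T$, the $(3,2)$ and $(3,4)$ blocks to vanish, the $(1,3)$/$(2,3)$ blocks to be $G_{11}$ and $-G_{12}^T$ appropriately, and the $(4,2)$ block $H_{22}$ to be the only surviving lower-left entry. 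Writing $\widehat{J}$ for the permuted symplectic form and checking $(\widehat{J}\,U^TSU)^T=-\widehat{J}\,U^TSU$ block by block yields all the stated zero patterns and the transpose relations.

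The main obstacle I expect is step three: verifying that the combination of (a) invariance of the single isotropic block and (b) the skew-Hamiltonian symmetry of $S$ produces exactly the displayed sparsity pattern — in particular that $G_{11}$ and $G_{22}$ are symmetric, that the $(1,3)$ and $(2,3)$ off-diagonal coupling blocks are $-G_{12}^T$ and $0$ in the right slots, and that nothing else appears in the strictly lower-triangular region besides $H_{22}$. This is a careful but essentially mechanical block-matrix bookkeeping argument; the conceptual content is entirely in choosing the right orthogonal symplectic completion $U$ and in translating ``skew-Hamiltonian'' into the symmetry constraint $S^TJ=-JS$, after which the pattern falls out by equating blocks of $(\widehat{J}\,U^TSU)^T$ with $-\widehat{J}\,U^TSU$. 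I would also remark that the hypothesis $k\le n$ is used implicitly to guarantee that a complement $Z$ of the required size exists inside a Lagrangian subspace.
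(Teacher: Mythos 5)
Note first that the paper does not prove this lemma at all: it is quoted from Kressner \cite{kres_05} as a recalled fact, so there is no in-paper argument to compare against. Your proposal is essentially the standard proof from that source, and its architecture is sound: the reverse implication by reading off the first block column of $SU=UB$ and the $(1,3)$ Gram block of $U^TU=I$; the forward implication by completing $X$ to an orthogonal symplectic $U=[X,Z,J^TX,J^TZ]$ via a Lagrangian extension, then combining invariance (zeros in the first block column) with the structure inherited by $U^TSU$. Two sign slips in your step three would, however, corrupt the "mechanical bookkeeping" if carried out literally. First, $(JS)^T=-JS$ is, using $J^T=-J$, equivalent to $S^TJ=JS$, \emph{not} to $S^TJ=-JS$; the latter is the Hamiltonian condition, and using it would produce the Hamiltonian sparsity pattern rather than the skew-Hamiltonian one. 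Second, with the correct identity the upper-right $n\times n$ block $G$ of $U^TSU$ comes out skew-symmetric, so $G_{11}$ and $G_{22}$ are skew-symmetric rather than symmetric — which is exactly what the $-G_{12}^T$ entry in the displayed pattern is signalling. With those signs fixed the bookkeeping does close: writing $U^TSU=\left[\begin{smallmatrix}A & G\\ Q & B\end{smallmatrix}\right]$ with $n\times n$ blocks, skew-Hamiltonicity gives $B=A^T$, $G^T=-G$, $Q^T=-Q$; invariance zeroes the first block column, skew-symmetry of $Q$ then propagates the $(3,1)$ and $(4,1)$ zeros to the $(3,2)$ position leaving only $H_{22}$, and $B=A^T$ yields the $(3,3)$, $(3,4)$, $(4,3)$, $(4,4)$ entries. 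Your remark that $k\le n$ is needed for the Lagrangian completion $Z$ to exist is correct.
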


We can build isotropic subspaces from the methods of Krylov  subspace. Recall that
the Krylov  subspaces  are  of  the form
$$
\mathcal{K}_m\equiv\mathcal{K}(A,v)=span\left\{v,A,Av,A^2v,\ldots,A^{m-1}v \right\}
$$
where $A\in \Rset^{n,m}$  and $v\in \Rset^m$. The Krylov subspace methods are : the Hermitian or skew-hermitian
Lanczos algorithm  and Arnoldi's method  and its variations. We give the  following  proposition  which contains
some  properties of these subspaces (see \cite[p. 126]{Saad_11})
   \begin{prop}
   \begin{enumerate}
     \item The Krylov subspace $\mathcal{K}_m$ is the subspace  of all vectors in $\mathbb{C}^n$  which  can  be
     written as $x=p(A)v$, where $p$  is a polynomial of degree less than  or equal to $m-1$.
     \item Let $m_0$  be the degree  of the minimal  polynomial  of  $v$.  Then $\mathcal{K}_{m_0}$  is invariant  under
     $A$  and $\mathcal{K}_m=\mathcal{K}_{m_0}$  for all $m\ge m_0$.
     \item The Krylov  subspace  $\mathcal{K}_m$  is of dimension  $m$  if and only if
     the grade  of $v$  with  respect  to $A$  is larger  than  $m-1$.
   \end{enumerate}
   \end{prop}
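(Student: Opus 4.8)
The plan is to treat the three assertions in turn, each being a direct consequence of the definition of $\mathcal{K}_m$ as $\mathrm{span}\{v,Av,\ldots,A^{m-1}v\}$ together with the defining minimality property of the minimal polynomial of $v$ with respect to $A$. This is a classical computation (see \cite[p.~126]{Saad_11}), and no deep machinery is needed.

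For the first assertion, I would simply observe that a vector $x$ lies in $\mathcal{K}_m$ if and only if it can be written as a linear combination $x=\sum_{j=0}^{m-1}c_jA^jv$; setting $p(t)=\sum_{j=0}^{m-1}c_jt^j$, this is exactly the statement that $x=p(A)v$ for some polynomial $p$ of degree at most $m-1$, and conversely every such $p(A)v$ expands into that span. So the two descriptions of $\mathcal{K}_m$ coincide.

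For the second assertion, recall that $m_0$ is the degree of the monic polynomial of least degree annihilating $v$, so that $A^{m_0}v$ is a linear combination of $v,Av,\ldots,A^{m_0-1}v$; in particular $A^{m_0}v\in\mathcal{K}_{m_0}$. Since moreover $A(A^jv)=A^{j+1}v\in\mathcal{K}_{m_0}$ for $0\le j\le m_0-2$, the subspace $\mathcal{K}_{m_0}$ is mapped into itself by $A$, i.e.\ it is $A$-invariant. A straightforward induction then gives $A^jv\in\mathcal{K}_{m_0}$ for every $j\ge 0$, hence $\mathcal{K}_m\subseteq\mathcal{K}_{m_0}$ whenever $m\ge m_0$; the reverse inclusion is immediate because $\{v,\ldots,A^{m_0-1}v\}$ is part of the spanning set defining $\mathcal{K}_m$. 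Therefore $\mathcal{K}_m=\mathcal{K}_{m_0}$ for all $m\ge m_0$. For the third assertion, note that $\dim\mathcal{K}_m=m$ precisely when $v,Av,\ldots,A^{m-1}v$ are linearly independent, which holds if and only if no nonzero polynomial of degree at most $m-1$ annihilates $v$ — that is, if and only if the grade of $v$ with respect to $A$ exceeds $m-1$; conversely, if that grade is at most $m-1$ the minimal polynomial yields a nontrivial dependence among $v,\ldots,A^{m-1}v$, forcing $\dim\mathcal{K}_m<m$.

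Since each step is elementary, there is no genuine obstacle here; the only point requiring care is the invariance argument in the second part, where one must invoke the minimality of $m_0$ to conclude that $A^{m_0}v$ — and not merely $A^{m_0-1}v$ — already lies in $\mathcal{K}_{m_0}$, which is precisely what makes $\mathcal{K}_{m_0}$ stop growing.
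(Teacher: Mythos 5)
Your proof is correct: all three parts are handled by the standard elementary argument (identifying $\mathcal{K}_m$ with $\{p(A)v : \deg p \le m-1\}$, using minimality of $m_0$ to show $A^{m_0}v\in\mathcal{K}_{m_0}$ and hence $A$-invariance, and translating $\dim\mathcal{K}_m=m$ into linear independence of $v,Av,\ldots,A^{m-1}v$). The paper itself gives no proof of this proposition — it simply cites \cite[p.~126]{Saad_11} — and your argument is precisely the standard one found there, so there is nothing to compare beyond noting that you have supplied the omitted details correctly.
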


 Thus any Krylov process constructed from a  skew-Hamiltonian matrix
  automatically produces an isotropic subspace.
  Hence the following proposition (see \cite[p. 399]{Wat07})

\begin{prop}
Let $S\in \Rset^{2N\times 2N}$  be a skew-Hamiltonian matrix
 and $u\in\Rset^{2N}$ be an arbitrary nonzero vector.
 Then the Krylov subspace  $\mathcal{K}_j(S,u)$  is isotropic  for all $j$.
\end{prop}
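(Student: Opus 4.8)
The plan is to exploit the two classical facts that make Krylov subspaces of skew-Hamiltonian matrices isotropic: first, that $\mathcal{K}_j(S,u)=\mathrm{span}\{u,Su,\ldots,S^{j-1}u\}$ is spanned by vectors of the form $p(S)u$ with $\deg p\le j-1$ (Proposition on Krylov properties, item 1); and second, that $S$ skew-Hamiltonian means $(JS)^T=-JS$, equivalently $S^TJ=JS$ — i.e. $S$ is $J$-selfadjoint with respect to the bilinear form $\langle Jx,y\rangle$ in the sense that $\langle JSx,y\rangle=\langle Jx,Sy\rangle$. Combining these, for any two vectors $x=p(S)u$ and $y=q(S)u$ in $\mathcal{K}_j(S,u)$ one gets $\langle Jx,y\rangle=\langle J\,p(S)u,\,q(S)u\rangle=\langle Ju,\,p(S)q(S)u\rangle$, so the whole question reduces to showing $\langle Ju, r(S)u\rangle=0$ for every polynomial $r$, equivalently $\langle Ju, S^m u\rangle=0$ for all $m\ge0$.

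First I would record the elementary lemma $JS^m = (S^m)^TJ$ for all $m\ge 0$, proved by induction from $JS=S^TJ$ (the skew-Hamiltonian defining identity rewritten): the base case $m=0$ is trivial, and $JS^{m+1}=(JS^m)S=(S^m)^TJS=(S^m)^TS^TJ=(S^{m+1})^TJ$. Then for the key scalar I would compute, using that $J^T=-J$,
\[
\langle Ju, S^m u\rangle = (Ju)^T S^m u = u^T J^T S^m u = -u^T J S^m u = -u^T (S^m)^T J u = -(S^m u)^T J u = -\langle Ju, S^m u\rangle,
\]
where the middle equality uses the lemma and the last step just reads the transpose back. Hence $\langle Ju, S^m u\rangle = -\langle Ju, S^m u\rangle$, so $\langle Ju, S^m u\rangle=0$ for every $m$. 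Actually, one should be mildly careful that the very last step is $-(S^m u)^T J u = -u^T (S^m)^T J^T{}^{\!T}\!u$; since $J^T=-J$ this is where the sign is genuinely consumed — I would double-check the bookkeeping here, as this sign chase is the only place an error could sneak in.

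From the vanishing of $\langle Ju, S^m u\rangle$ for all $m$, linearity in the polynomial gives $\langle Ju, r(S)u\rangle=0$ for every polynomial $r$, hence by the computation in the first paragraph $\langle Jx,y\rangle=0$ for all $x,y\in\mathcal{K}_j(S,u)$, which is exactly the statement that $\mathcal{K}_j(S,u)\perp J\mathcal{K}_j(S,u)$, i.e. $\mathcal{K}_j(S,u)$ is isotropic; since $j$ was arbitrary this holds for all $j$. (As a sanity check, Proposition~\ref{THI}(1) forces $\dim\mathcal{K}_j(S,u)\le N$, consistent with the isotropy just proved and with $\mathcal{K}_j$ stabilising once $j$ reaches the grade of $u$.) The main obstacle is really just getting the transpose/sign identities for $J$ and $S$ straight; once $JS^m=(S^m)^TJ$ and $J^T=-J$ are in hand the result is immediate, and there is no genuine analytic or combinatorial difficulty.
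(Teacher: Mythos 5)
Your proof is correct. The paper states this proposition without proof (it is quoted from Watkins, p.~399), and your argument is essentially the standard one: reduce isotropy of $\mathcal{K}_j(S,u)$ to the vanishing of $u^TJS^mu$ for all $m\ge 0$, which holds because $JS^m=(S^m)^TJ$ is skew-symmetric whenever $JS$ is. Your sign bookkeeping checks out: $\langle Ju,S^mu\rangle=-u^TJS^mu=-u^T(S^m)^TJu=-\langle Ju,S^mu\rangle$, so the quantity vanishes, and bilinearity then gives $\langle Jp(S)u,q(S)u\rangle=0$ for all polynomials $p,q$.
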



        \section{Rank $k$ perturbation of symplectic matrices}\label{Sec2}

Consider a symplectic matrix $W$ and a  $J$-Lagrangian subspace
$\mathcal{L}$ of dimension $N$.  Let $u_1,\cdots,u_k$ be  $k$
vectors of  $\mathcal{L}$, where $k\leq N$.
 Setting $U=[u_1;\ldots;u_k],$ and considering the
matrix
 $$\widetilde{W}=\left(I+UU^{T}J\right)W, $$
  we have the following proposition
\begin{prop}
  the matrix $\widetilde{W}$ is $J$-symplectic.
\end{prop}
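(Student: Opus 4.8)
The plan is to verify the defining identity $\widetilde{W}^T J \widetilde{W} = J$ directly, exploiting the single structural fact that the columns of $U$ lie in a $J$-Lagrangian subspace. First I would translate that hypothesis into matrix form: since $\mathcal{L}$ is $J$-Lagrangian, Definition \ref{d0001} gives $\langle Ju_i, u_j\rangle = u_i^T J u_j = 0$ for all $i,j \in \{1,\dots,k\}$, which is precisely the compact identity $U^T J U = 0$. This is the only property of $U$ that will be used, and it is the crux of the argument.

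Next I would record the elementary facts $J^T = -J$ (skew-symmetry of $J$ from system (\ref{Eq1})) and $W^T J W = J$ (the matrizant identity (\ref{IdSymp}), here the hypothesis that $W$ is $J$-symplectic). From $J^T = -J$ one gets $\left(I + UU^T J\right)^T = I + J^T U U^T = I - J U U^T$.

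Then I would simply expand:
$$
\widetilde{W}^T J \widetilde{W} = W^T \left(I - JUU^T\right) J \left(I + UU^T J\right) W
= W^T\left[\, J + JUU^TJ - JUU^TJ - JU\left(U^TJU\right)U^TJ \,\right] W.
$$
The two linear-in-$UU^T$ terms cancel against each other, and the quartic term $JU(U^TJU)U^TJ$ vanishes because $U^TJU = 0$. Hence $\widetilde{W}^T J \widetilde{W} = W^T J W = J$, so $\widetilde{W}$ is $J$-symplectic.

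There is no real obstacle here: once the hypothesis is written as $U^TJU=0$, the computation is a two-line expansion whose only subtlety is sign-bookkeeping for $J^T=-J$. If anything, the point worth emphasizing in the write-up is that $k \le N$ and membership of the $u_i$ in a \emph{single} Lagrangian subspace are exactly what make $U^TJU=0$ hold (an arbitrary rank-$k$ perturbation would not preserve the symplectic structure), so I would state that dependence explicitly rather than burying it in the algebra.
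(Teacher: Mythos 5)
Your proof is correct and follows essentially the same route as the paper's: expand $\widetilde{W}^TJ\widetilde{W}=W^T(I-JUU^T)(J+JUU^TJ)W$, cancel the two linear terms, and kill the quartic term via $U^TJU=0$. The only difference is that you explicitly derive $U^TJU=0$ from the Lagrangian hypothesis, which the paper leaves implicit; that is a welcome clarification, not a deviation.
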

\begin{proof} We have the  following inequalities
 \begin{align*}
 \widetilde{W}^TJ\widetilde{W}&=W^T(I-JUU^T)(J+JUU^TJ)W\\
 &=W^T\left[J+\underbrace{JUU^TJ-JUU^TJ}_{=0}-JU\underbrace{(U^TJU)}_{=0}U^TJ\right]W\\
 &=W^TJW=J.
 \end{align*}
\end{proof}
 The following proposition is a set of results deduced  from  \cite{Yan2}.
\begin{prop}\label{p12} 
Consider the matrix  $\widetilde{I}=(I+UU^TJ)$. Then
\begin{enumerate}
\item[1)] $\widetilde{I}$ is  $J$-symplectic.
\item[2)]$\widetilde{I}^{-1}=I-UU^{T}J.$
\item[3)] $ dim\left(ker(\widetilde{I}-I)\right)=2N-k,$ where $k$ is the
rank of $U.$
\item[4)] $1\in \sigma(\widetilde{I}),$ where $\sigma(\widetilde{I})$ is the spectrum of
$\widetilde{I}.$ 
\end{enumerate}
\end{prop}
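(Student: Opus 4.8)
The plan is to dispatch the four assertions one at a time, each reducing to a short computation built on the single fact that $U^{T}JU=0$. This identity holds because the columns $u_1,\ldots,u_k$ of $U$ all lie in the $J$-Lagrangian subspace $\mathcal{L}$, so $u_i^{T}Ju_j=-\langle Ju_i,u_j\rangle=0$ for every pair $i,j$, i.e. the $k\times k$ matrix $U^{T}JU$ is the zero matrix. From here everything is essentially algebra.

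For 1), I would note that $\widetilde{I}=(I+UU^{T}J)W$ with $W=I$, so $\widetilde{I}$ is the matrix $\widetilde{W}$ of the preceding proposition applied to the (trivially $J$-symplectic) matrix $W=I$, hence $J$-symplectic. Alternatively one repeats the same computation directly: since $\widetilde{I}^{T}=I-JUU^{T}$, one gets $\widetilde{I}^{T}J\widetilde{I}=(I-JUU^{T})(J+JUU^{T}J)=J-JU(U^{T}JU)U^{T}J=J$. For 2), I would simply expand $(I+UU^{T}J)(I-UU^{T}J)=I-U(U^{T}JU)U^{T}J=I$, and likewise $(I-UU^{T}J)(I+UU^{T}J)=I$, so that $\widetilde{I}^{-1}=I-UU^{T}J$.

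For 3), observe that $\widetilde{I}-I=UU^{T}J$. Since $J$ is invertible, $x\mapsto Jx$ is a linear isomorphism and $\ker(UU^{T}J)=J^{-1}\!\left(\ker(UU^{T})\right)$ has the same dimension as $\ker(UU^{T})$; moreover $\ker(UU^{T})=\ker(U^{T})$, because $UU^{T}y=0$ forces $\|U^{T}y\|^{2}=y^{T}UU^{T}y=0$. By the rank--nullity theorem, $\dim\ker(U^{T})=2N-rank(U)=2N-k$, which is the claim. Finally, 4) is immediate from 3): since $k\le N$, we have $2N-k\ge N\ge 1>0$, so $\ker(\widetilde{I}-I)\neq\{0\}$ and therefore $1\in\sigma(\widetilde{I})$.

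There is no real obstacle here; the computations are routine once the isotropy relation $U^{T}JU=0$ is in hand. The only place demanding a little care is the chain $\dim\ker(UU^{T}J)=\dim\ker(UU^{T})=\dim\ker(U^{T})=2N-k$ in part 3, where one must invoke both the invertibility of $J$ and the hypothesis that $U$ has full column rank $k$ (not merely that $k$ vectors were listed); without the rank hypothesis the dimension formula would only be an inequality.
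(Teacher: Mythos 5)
Your proof is correct. Note that the paper does not actually write out a proof of this proposition: it simply states that ``the proof is easily deduced from those of [Yan2]'' and stops there, so there is no argument in the text to compare yours against. What you supply is exactly the computation one would expect that reference to contain: everything hinges on the isotropy relation $U^{T}JU=0$, which you correctly derive from the hypothesis that the columns of $U$ lie in a $J$-Lagrangian subspace, and parts 1) and 2) then follow by direct expansion (part 1 is also, as you observe, the preceding proposition with $W=I$). Your treatment of part 3) is the only place requiring any care, and you handle it properly: $\widetilde{I}-I=UU^{T}J$, the invertibility of $J$ reduces the kernel computation to that of $UU^{T}$, the identity $\ker(UU^{T})=\ker(U^{T})$ uses $y^{T}UU^{T}y=\|U^{T}y\|^{2}$ (valid here since the paper works over $\Rset$; over $\Cset$ one would replace $U^{T}$ by $U^{*}$ in that step), and rank--nullity gives $2N-k$. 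Part 4) follows since $k\le N<2N$ forces the kernel to be nontrivial. You have in effect filled a gap the paper leaves to an external citation.
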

\begin{proof}
The proof is easily deduced from those  of \cite{Yan2}.
\end{proof}
From the foregoing, we give the following definition
\begin{definition} \label{d}
Let $W$ be a symplectic matrix. We call rank $k$
perturbation of $W,$ any matrix of the form
\begin{equation}\label{er1}
\widetilde{W}=(I+UU^TJ)W,
\end{equation}
where $U$ is a matrix of rank $k$ whose columns belong in
a $J$-Lagrangian subspace.
\end{definition}
\begin{rem}\label{r1}
 The matrix $\widetilde{W}$   can be put in the form
$$
\widetilde{W}=(I+\sum_{j=1}^{k}u_ju_j^TJ)W.
$$
\end{rem}
More specially, this remark shows that any rank $k$ perturbation of $W$ is $k$
rank one perturbations of the symplectic matrix $W.$ We have
$$
\left(\prod_{j=1}^{k}\left(I+u_ju^T_jJ\right)\right)W=\left( I+\sum_{j=1}^{k}u_ju^T_jJ \right)W.
$$

Consider a  symplectic  matrix of function  $(X(t))_{t\in\Rset}$ ;
we can consider for example the solution of Hamiltonian  system (\ref{Eq2})
which are $J$-symplectic.
 We have the following definition
\begin{definition}\label{d0}
We call rank $k$ perturbation of $X(t)$ any  function matrix  of the form
\begin{equation}\label{aq}
\widetilde{X}(t)=(I+UU^TJ)X(t),
\end{equation}
where $rank(U)=k$ and the columns of $U$ belong in a $J$-Lagrangian
subspace.
\end{definition}
\begin{rem}\label{r2}
Since the  function  matrix  $(X(t))_{t\in \Rset}$ is $J$-symplectic, its rank $k$ perturbation will be $J-$symplectic.
\end{rem}
From definition \ref{d0} and remark \ref{r2}, we can introduce the theory of rank
$k$ perturbation of Hamiltonian system with periodic coefficients.

\section{Rank $k$ perturbation of Hamiltonian system with periodic
coefficients}\label{Sec3}

Let $U$ be a constant matrix of rank $k$ such that its  columns
belong in a $J$-Lagrangian subspace and $(X(t))_{t\geq 0}$ be the
fundamental solution of (\ref{Eq2}).
We have the
following proposition
\begin{prop}\label{p1}
Consider the following perturbed Hamiltonian system
\begin{equation}\label{er3}
   J\dfrac{d\widetilde{X}(t)}{dt} =\left[H(t)+E(t)\right]\widetilde{X}(t)
\end{equation}
where $$
E(t)=(JUU^{T}H(t))^{T}+JUU^{T}H(t)+(UU^{T}J)^{T}H(t)(UU^{T}J).
$$Then $\widetilde{X}(t)=(I+UU^{T}J)X(t)$ is a solution of system
(\ref{er3}).
\end{prop}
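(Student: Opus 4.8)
The plan is to substitute $\widetilde{X}(t)=\widetilde{I}\,X(t)$, with the \emph{constant} matrix $\widetilde{I}=I+UU^{T}J$, directly into the perturbed system (\ref{er3}) and reduce everything to a single time-independent matrix identity. First I would differentiate: since $\widetilde{I}$ does not depend on $t$, one has $\dfrac{d\widetilde{X}}{dt}=\widetilde{I}\,\dfrac{dX}{dt}$, and (\ref{Eq2}) gives $\dfrac{dX}{dt}=J^{-1}H(t)X(t)$. Hence
$$J\frac{d\widetilde{X}(t)}{dt}=J\widetilde{I}J^{-1}H(t)X(t),$$
whereas the right-hand side of (\ref{er3}) is $\left(H(t)+E(t)\right)\widetilde{I}\,X(t)$. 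Since $X(t)$ is invertible for every $t$ (it is $J$-symplectic by (\ref{IdSymp})), it is enough to prove the algebraic identity
$$J\widetilde{I}J^{-1}H(t)=\left(H(t)+E(t)\right)\widetilde{I}.$$

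Next I would simplify the left-hand side. One computes $J\widetilde{I}J^{-1}=J(I+UU^{T}J)J^{-1}=I+JUU^{T}$, so the left-hand side equals $H(t)+JUU^{T}H(t)$. Expanding the right-hand side, $\left(H+E\right)(I+UU^{T}J)=H+HUU^{T}J+E(I+UU^{T}J)$, so the identity to be established reduces to
$$E(t)\left(I+UU^{T}J\right)=JUU^{T}H(t)-H(t)UU^{T}J.$$

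The core of the proof is to verify this last identity by direct expansion. Using $H(t)^{T}=H(t)$ and $J^{T}=-J$, one rewrites the defining expression for $E(t)$ as
$$E(t)=-H(t)UU^{T}J+JUU^{T}H(t)-JUU^{T}H(t)UU^{T}J.$$
Multiplying on the right by $I+UU^{T}J$, the decisive simplification comes from the hypothesis that the columns of $U$ lie in a $J$-Lagrangian subspace, which forces $U^{T}JU=0$ and hence $UU^{T}JUU^{T}=U(U^{T}JU)U^{T}=0$. This kills every contribution to $E(t)\cdot UU^{T}J$ except the term $JUU^{T}H(t)UU^{T}J$, which in turn cancels the last summand of $E(t)$; what survives is precisely $JUU^{T}H(t)-H(t)UU^{T}J$, as needed. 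As a side check, the same two facts give $E(t)^{T}=E(t)$, so $H(t)+E(t)$ is symmetric and (\ref{er3}) is again a genuine Hamiltonian system, in agreement with Remark \ref{r2}. I do not anticipate any real obstacle beyond careful bookkeeping of the transposes and of the order of the factors; the only genuinely load-bearing fact is the isotropy relation $U^{T}JU=0$.
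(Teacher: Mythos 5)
Your proof is correct and follows essentially the same route as the paper: differentiate $\widetilde{X}=(I+UU^{T}J)X$, use (\ref{Eq2}), and verify the resulting algebraic identity by direct expansion using $H(t)^{T}=H(t)$, $J^{T}=-J$, and the isotropy relation $U^{T}JU=0$. The only cosmetic difference is that you right-multiply by $I+UU^{T}J$ and cancel, whereas the paper substitutes the explicit inverse $(I+UU^{T}J)^{-1}=I-UU^{T}J$ (which relies on the same relation $U^{T}JU=0$) and expands; the computations are equivalent.
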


\begin{proof}~~\\
 By derivation of $\widetilde{X}(t)$, we obtain:
\begin{align*}
    J\dfrac{d\widetilde{X}(t)}{dt}=&J(I+UU^{T}J)J^{-1}J\dfrac{dX(t)}{dt}\\
    =&J(I+UU^{T}J)J^{-1}H(t)X(t),\quad \text{according \,form \, system}\,(\ref{Eq2})\\
    =&[H(t)+JUU^{T}H(t)](I+UU^TJ)^{-1}\widetilde{X}(t)\\
    =&\left[H(t)+\underbrace{(JUU^{T}H(t))^{T}+JUU^{T}H(t)+(UU^{T}J)^TH(t)(UU^{T}J)}_{E(t)}\right]\widetilde{X}(t)
 \end{align*}
 Hence system (\ref{er3})
 where
 \begin{equation}\label{elia}
E(t)=(JUU^{T}H(t))^{T}+JUU^{T}H(t)+(UU^{T}J)^{T}H(t)(UU^{T}J).
\end{equation}
\end{proof}
We can easily check that $E(t)$ is symmetric and periodic i.e.
$E(t)^T=E(t)$ and $E(t+P)=E(t)$ for all $t\in \Rset_{+}.$ \\ The
following corollary gives us a simplified form of system (\ref{er3}), with $X(0)=I.$
\begin{cor}\label{c2} Equation (\ref{er3}) can be put in  the
form
\begin{equation}\label{er4}
\left\{\begin{array}{rcl}
    J\dfrac{d\widetilde{X}(t)}{dt}&=&\left(I-UU^TJ\right)^TH(t)\left(I-UU^TJ\right)\widetilde{X}(t),\; t\in \Rset_{+},\\
    & & \\
     \widetilde{X}(0)&=&I+UU^TJ
     \end{array}\right.
\end{equation}
\end{cor}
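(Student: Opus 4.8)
The plan is to recognize that Corollary \ref{c2} is just a cosmetic rewriting of system (\ref{er3}): I must check that the two coefficient matrices coincide and that the stated initial value is forced by $X(0)=I$. The only ingredients beyond elementary matrix algebra are the identity $(I+UU^TJ)^{-1}=I-UU^TJ$ from Proposition \ref{p12}, the skew-symmetry $J^T=-J$, and the symmetry $H(t)^T=H(t)$.

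First I would resume the computation in the proof of Proposition \ref{p1} at the line $J\,d\widetilde{X}(t)/dt=\left[H(t)+JUU^TH(t)\right]\left(I+UU^TJ\right)^{-1}\widetilde{X}(t)$ and substitute $\left(I+UU^TJ\right)^{-1}=I-UU^TJ$. Expanding $\left[H(t)+JUU^TH(t)\right]\left(I-UU^TJ\right)$ produces the four terms $H(t)$, $JUU^TH(t)$, $-H(t)UU^TJ$ and $-JUU^TH(t)UU^TJ$. Next I would expand the proposed coefficient $\left(I-UU^TJ\right)^TH(t)\left(I-UU^TJ\right)$: since $(UU^TJ)^T=J^T(UU^T)^T=-JUU^T$, this equals $H(t)+JUU^TH(t)-H(t)UU^TJ-JUU^TH(t)UU^TJ$, i.e. exactly the same four terms. (Equivalently, one reads off directly that $(JUU^TH(t))^T=-H(t)UU^TJ$ and $(UU^TJ)^TH(t)(UU^TJ)=-JUU^TH(t)UU^TJ$, so that $H(t)+E(t)$ in (\ref{elia}) is term-by-term this congruence of $H(t)$.) For the initial condition, $\widetilde{X}(0)=(I+UU^TJ)X(0)=I+UU^TJ$, because $X(0)=I$ by (\ref{Eq2}); moreover $I+UU^TJ$ is $J$-symplectic by Proposition \ref{p12}, so the initial value problem (\ref{er4}) is well posed.

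Although not strictly part of the statement, I would also remark that the coefficient $\left(I-UU^TJ\right)^TH(t)\left(I-UU^TJ\right)$ is symmetric (a congruence of the symmetric matrix $H(t)$) and $P$-periodic (since $U$ is constant and $H(t+P)=H(t)$), so (\ref{er4}) is genuinely a Hamiltonian system with $P$-periodic coefficients, consistent with the symmetry and periodicity of $E(t)$ already noted after Proposition \ref{p1}. There is no real obstacle here; the only point requiring care is the sign bookkeeping induced by $J^T=-J$ when transposing $UU^TJ$, and checking that the quadratic piece $(UU^TJ)^TH(t)(UU^TJ)$ of $E(t)$ matches the cross term $-JUU^TH(t)UU^TJ$ of the congruence rather than cancelling against another term.
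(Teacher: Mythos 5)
Your proposal is correct and follows essentially the same route as the paper: expand the congruence $(I-UU^TJ)^TH(t)(I-UU^TJ)$ using $(UU^TJ)^T=-JUU^T$ and $H(t)^T=H(t)$, identify the result with $H(t)+E(t)$ from Proposition \ref{p1}, and read off $\widetilde{X}(0)=(I+UU^TJ)X(0)=I+UU^TJ$. Your sign bookkeeping in fact matches the definition (\ref{elia}) of $E(t)$ exactly (the paper's displayed sum inside the corollary's proof writes $J^T$ where (\ref{elia}) has $J$, a harmless inconsistency that your term-by-term check implicitly resolves).
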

\begin{proof}
Developing $(I-UU^{T}J)^{T}H(t)(I-UU^{T}J),$ we see that
\begin{align*}
(I-UU^{T}J)^{T}H(t)(I-UU^{T}J)&=H(t)+\\
&\underbrace{(J^{T}UU^{T}H(t))^{T}+J^{T}UU^{T}H(t)+(UU^{T}J)^{T}H(t)(UU^{T}J)}_{E(t)}
\end{align*}
and $\widetilde{X}(0)=(I+UU^TJ)X(0)=I+UU^TJ.$
\end{proof}
We give the following corollary
\begin{cor}\label{c3}
Any solution $(\widetilde{X}(t))_{t\geq 0}$ of the perturbed system
(\ref{er3}) of system (\ref{Eq2}), is of the form
$$
\widetilde{X}(t)=(I+UU^TJ)X(t),
$$ where $(X(t))_{t\geq 0}$ is the
fundamental solution of system (\ref{Eq2}).
\end{cor}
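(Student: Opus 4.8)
The plan is simply to invert the relation $\widetilde{X}(t)=(I+UU^TJ)X(t)$ and to verify that the matrix so obtained solves the unperturbed system. By item 2) of Proposition \ref{p12} the matrix $I+UU^TJ$ is $J$-symplectic, hence invertible, with $(I+UU^TJ)^{-1}=I-UU^TJ$. So, starting from an arbitrary solution $\left(\widetilde{X}(t)\right)_{t\ge 0}$ of (\ref{er3}), I would set
$$
X(t):=(I+UU^TJ)^{-1}\widetilde{X}(t)=(I-UU^TJ)\widetilde{X}(t),\qquad t\ge 0 ,
$$
and it then suffices to show that $\left(X(t)\right)_{t\ge 0}$ satisfies $J\,dX(t)/dt=H(t)X(t)$, i.e. the differential equation of (\ref{Eq2}).

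Next I would differentiate, using that $U$ is a constant matrix, and then substitute the equation (\ref{er3}) satisfied by $\widetilde{X}$:
$$
J\frac{dX(t)}{dt}=J(I+UU^TJ)^{-1}\frac{d\widetilde{X}(t)}{dt}=J(I+UU^TJ)^{-1}J^{-1}\left[H(t)+E(t)\right]\widetilde{X}(t).
$$
Now the computation already carried out in the proof of Proposition \ref{p1} amounts exactly to the matrix identity
$$
\left(H(t)+E(t)\right)(I+UU^TJ)=J(I+UU^TJ)J^{-1}H(t),
$$
equivalently $H(t)+E(t)=J(I+UU^TJ)J^{-1}H(t)(I+UU^TJ)^{-1}$. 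Substituting this into the previous display and cancelling the factors $J(I+UU^TJ)^{-1}J^{-1}\cdot J(I+UU^TJ)J^{-1}=H(t)$-free terms yields
$$
J\frac{dX(t)}{dt}=H(t)(I+UU^TJ)^{-1}\widetilde{X}(t)=H(t)X(t),
$$
so $\left(X(t)\right)_{t\ge 0}$ is a fundamental solution of (\ref{Eq2}) and, by construction, $\widetilde{X}(t)=(I+UU^TJ)X(t)$, which is the claim.

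I do not expect a genuine obstacle here: the statement is essentially a reversal of the identity proved for Proposition \ref{p1} combined with the invertibility of $I+UU^TJ$. The only point deserving a word of care is the interpretation of ``fundamental solution'' of (\ref{Eq2}): what the argument truly establishes is that the map $X\mapsto(I+UU^TJ)X$ is a bijection between the solution set of the ODE in (\ref{Eq2}) and that of the ODE in (\ref{er3}). If one prefers, the same conclusion follows by uniqueness for linear systems: since $(I+UU^TJ)X(t)$ solves (\ref{er3}) for the matrizant $X$ and the right factor may be multiplied by any constant matrix, any solution $\widetilde{X}$ of (\ref{er3}) equals $(I+UU^TJ)\left[X(t)(I+UU^TJ)^{-1}\widetilde{X}(0)\right]$, and the bracketed matrix is again a fundamental solution of (\ref{Eq2}).
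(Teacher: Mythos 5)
Your proposal is correct and follows essentially the same route as the paper: define $X(t)=(I-UU^TJ)\widetilde{X}(t)$, differentiate, and verify that $X$ satisfies the unperturbed equation (\ref{Eq2}), the only difference being that you cancel via the identity $H+E=J(I+UU^TJ)J^{-1}H(I+UU^TJ)^{-1}$ from the proof of Proposition \ref{p1}, whereas the paper substitutes into (\ref{er4}) and invokes the symplecticity relation $(I+UU^TJ)^TJ(I+UU^TJ)=J$. Your closing remark on the initial condition and the constant right factor is consistent with the paper's check that $X(0)=(I-UU^TJ)(I+UU^TJ)=I$.
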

\begin{proof}
From proposition \ref{p1}, if $X(t)$ is the solution of (\ref{Eq2}),
then the perturbed matrix $\widetilde{X}(t)=(I+UU^TJ)X(t)$ is the solution
of (\ref{er4}). Reciprocally, for any solution $\widetilde{X}(t)$ of
(\ref{er4}), let
$$
X(t)=(I-UU^TJ)\widetilde{X}(t)
$$ where $U$ is the matrix defined in system (\ref{er4}). Then
$\widetilde{X}(t)=(I+UU^TJ)X(t).$ Replacing $\widetilde{X}(t)$ in
(\ref{er4}), we get
\begin{align*}
J(I+UU^TJ)\dfrac{dX(t)}{dt}&=(I-UU^TJ)^TH(t)X(t)\\
\underbrace{(I+UU^TJ)^TJ(I+UU^TJ)}_{=J}\dfrac{dX(t)}{dt}&=H(t)X(t)\\
\Rightarrow J\dfrac{dX(t)}{dt}&=H(t)X(t)
\end{align*}
and $X(0)=(I-UU^TJ)\widetilde{X}(0)=(I-UU^TJ)(I+UU^TJ)=I.$
Consequently, $X(t)$ is the solution of (\ref{Eq2}).
\end{proof}

\begin{rem}\label{r3}
Basing on remark \ref{r1}, system (\ref{er4}) can be
written as below
\begin{equation}\label{A}
\left\{\begin{array}{rcl}
    J\dfrac{d\widetilde{X}(t)}{dt}&=&\left(I-\sum_{j=1}^{k}u_ju^T_jJ\right)^{T}H(t)\left(I-\sum_{j=1}^{k}u_ju^T_jJ\right)\widetilde{X}(t)\\
    & & \\
     \widetilde{X}(0)&=&(I+\sum_{j=1}^{k}u_ju^T_jJ)
     \end{array}\right.
\end{equation}
where each vector $\left(u_j\right)_{1\leq j\leq k}\subset\Rset^{2N}$ belongs in a same $J$-Lagrangian subspace.
\end{rem}
 We can immediately see that the rank $k$
perturbation of (\ref{Eq2}) can be interpreted as $k$ rank one perturbations of (\ref{Eq2}). In fact, since
$$
I-UU^TJ=I-\sum_{j=1}^{k}u_ju^T_jJ=\prod_{j=1}^{k}\left(I-u_ju^T_jJ\right),
$$ we easily see that  system (\ref{A}) can be put in  the following form
\begin{equation}\label{A1}
\left\{\begin{array}{rcl}
    J\dfrac{d\widetilde{X}(t)}{dt}&=&\left(\prod_{j=1}^{k}\left(I-u_ju^T_jJ\right)\right)^{T}H(t)\left(\prod_{j=1}^{k}\left(I-u_ju^T_jJ\right)\right)\widetilde{X}(t)\\
    & & \\
     \widetilde{X}(0)&=&\prod_{j=1}^{k}\left(I+u_ju^T_jJ\right)
     \end{array}\right.
\end{equation}
which is the same as the bellow system, for all $p\in \{ 1,2,...,k-1\}:$

\begin{equation}\label{A3}
\left\{\begin{array}{rcl}
    J\dfrac{d\widetilde{X}(t)}{dt}&=&\left(\prod_{j=p+1}^{k}(I-u_ju^T_jJ)\right)^{T}H^{(p)}(t)\left(\prod_{j=p+1}^{k}(I-u_ju^T_jJ)\right)\widetilde{X}(t)\\
    & & \\
     \widetilde{X}(0)&=&\left(\prod_{j=p+1}^{k}(I+u_{(k+p-j+1)}u^T_{(k+p-j+1)}J)\right)\overline{X}^{(p)}(0)
     \end{array}\right.
\end{equation}
where $$
H^{(p)}(t)=\left(\prod_{j=1}^{p}(I-u_ju^T_jJ)\right)^{T}H(t)\left(\prod_{j=1}^{p}(I-u_ju^T_jJ)\right)\;\text{ and\;} \overline{X}^{(p)}(0)=\prod_{j=1}^{p}\left(I+u_{(p-j+1)}u^T_{(p-j+1)}J\right).
$$
Now, let us interest to the Jordan canonical form  of the solution $(\widetilde{X}(t))_{t\geq 0}$ of the perturbed system (\ref{er4}) of (\ref{Eq2}) in following section.

\section{Jordan canonical form  of    $(\widetilde{X}(t))_{t>0}$ }\label{Sec4}

\begin{thm}\label{THR}
Let $J\in \Cset^{2N\times 2N}$ be  skew-symmetric and nonsingular   matrix,
 $(X(t))_{t>0}$ fondamental solution of system (\ref{Eq2})   and $\lambda(t)\in \Cset$
  an  eigenvalue of  $X(t)$ for all $t>0$.
  Assume  that   $X(t)$  has  the   Jordan  canonical  form
   $$
  \left(\bigoplus_{j=1}^{l_1} \mathcal{J}_{n_1}(\lambda(t))\right)\oplus
  \left(\bigoplus_{j=1}^{l_2}\mathcal{J}_{n_2}(\lambda(t))\right)
  \oplus\cdots\oplus\left(
  \bigoplus_{j=1}^{l_{m(t)}}\mathcal{J}_{n_{m(t)}}(\lambda(t))\right)\oplus \mathcal{J}(t),
  $$
where  $n_1>\cdots >n_{m(t)}$  with $m\; :\; \Rset \longrightarrow \Nset^*$
 a  function  of index such that the algebraic multiplicities is $a(t)=l_1n_1+\cdots+l_{m(t)}n_{m(t)}$
   and    $\mathcal{J}(t)$  with  $\sigma(\mathcal{J}(t))\subseteq \Cset\setminus\{\lambda(t)\}$
  contains all  Jordan blocks associated with  eigenvalues  different  from  $\lambda(t)$.
   Furthermore, let   $B(t)=UU^TJX(t)$ where $U\in \Cset^{2N\times k}$ is such that its
   columns generate a Lagrangian subspace.

\begin{enumerate}
  \item[(1)]  If   $\forall t>0$, $\lambda(t)\not\in \{-1,1\}$, then  generically with respect   to the components  of $U$,
    the matrix  $X(t)+B(t)$  has the Jordan canonical  form

       $$\left\{\begin{array}{lcl}
       {\displaystyle \left(\bigoplus_{j=1}^{l_1-k} \mathcal{J}_{n_1}(\lambda(t))\right)\oplus
  \left(\bigoplus_{j=1}^{l_2}\mathcal{J}_{n_2}(\lambda(t))\right)\oplus\cdots
      \oplus
      \left(
  \bigoplus_{j=1}^{l_{m(t)}}\mathcal{J}_{n_{m(t)}}(\lambda(t))\right)\oplus \widetilde{\mathcal{J}}(t)},& \text{if} & k\leq l_1
  \\
  & & \\
  {\displaystyle \left(\bigoplus_{j=1}^{l_i-k_i} \mathcal{J}_{n_i}(\lambda(t))\right)\oplus
  \left(\bigoplus_{j=1}^{l_{i+1}}\mathcal{J}_{n_{i+1}}(\lambda(t))\right)\oplus\cdots
      \oplus
      \left(
  \bigoplus_{j=1}^{l_{m(t)}}\mathcal{J}_{n_{m(t)}}(\lambda(t))\right)\oplus \widetilde{\mathcal{J}}(t)},& \text{if} &
  \left\{\begin{array}{c}{\displaystyle k=\sum_{s=1}^{i-1}
  l_s+k_i}\\\text{with}\;\; k_i\leq l_i\\ \text{and}\;\; i>1.\end{array}\right.
  \end{array}\right.
  $$

      where   $\widetilde{\mathcal{J}}(t)$   contains  all the Jordan  blocks of $X(t)+B(t)$   associated
      with  eigenvalues  different  from    $\lambda(t)$.

  \item[(2)]  If  $\exists  t_0 >0$, verifying    $\lambda(t_0)\in \{+1,1\}$, we  have
   \begin{enumerate}
   \item[(2a)]  if  ${\displaystyle k=\sum_{s=1}^{i-1}l_s+k_i}$ with $n_1,n_2,\ldots,n_i$  are even and  $k_i\leq l_i$,
     then generically  with   respect to   the components of $U$,  the matrix
     $X(t_0)+B(t_0)$   has  the Jordan  canonical form
      $$
      \left(\bigoplus_{j=1}^{l_i-k_i} \mathcal{J}_{n_i}(\lambda(t_0))\right)\oplus
  \left(\bigoplus_{j=1}^{l_{i+1}}\mathcal{J}_{n_{i+1}}(\lambda(t_0))\right)
      \oplus\cdots\oplus
      \left(
  \bigoplus_{j=1}^{l_{m(t)}}\mathcal{J}_{n_{m(t)}}(\lambda(t_0))\right)\oplus \widetilde{\mathcal{J}}(t_0),
      $$

      where   $\widetilde{\mathcal{J}}(t_0)$   contains all   the Jordan blocks  of   $X(t_0)+B(t_0)$
      associated with eigenvalues  different  from   $\lambda(t_0)$.
 \item[(2b)]  if ${\displaystyle  k=\sum_{s=1}^{i-1}l_s+2k_i-1}$ with $2k_i\leq l_i$ and  $n_i$  is odd, then  $l_i$
  is even   and  generically  with   respect   to   the components of $U$,  the matrix
     $X(t_0)+B(t_0)$   has the Jordan  canonical form

$$
     \mathcal{J}_{n_i+1}(\lambda(t_0)) \oplus\left(\bigoplus_{j=1}^{l_i-2i} \mathcal{J}_{n_i}(\lambda(t_0))\right)
     \oplus\cdots\oplus
     \left(
  \bigoplus_{j=1}^{l_{m(t)}}\mathcal{J}_{n_{m(t)}}(\lambda(t_0))\right)\oplus \widetilde{\mathcal{J}}(t_0),
  $$

      where  $\widetilde{\mathcal{J}}(t_0)$   contains all the Jordan  blocks of  $X(t_0)+B(t_0)$
       associated with eigenvalues different   from      $\lambda(t_0)$.
       \end{enumerate}
\end{enumerate}
\end{thm}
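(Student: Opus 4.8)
The plan is to reduce everything to the known rank-$k$ perturbation theory for symplectic matrices. By Corollary~\ref{c3}, any solution of the perturbed system~(\ref{er4}) is exactly $\widetilde{X}(t)=(I+UU^TJ)X(t)=X(t)+UU^TJX(t)=X(t)+B(t)$, so the matrix whose Jordan form we want is literally the rank-$k$ perturbation $\widetilde{W}=(I+UU^TJ)W$ of $W=X(t)$ in the sense of Definition~\ref{d}, the hypothesis on $U$ (columns generating a Lagrangian subspace) being precisely the one required there. Thus for each fixed $t>0$ the statement is a statement about the Jordan structure of a rank-$k$ perturbation of the symplectic matrix $X(t)$, and I would invoke the rank-$k$ perturbation results for symplectic matrices — the analogue, for symplectic matrices and $k$-dimensional isotropic/Lagrangian perturbations, of the rank-one results drawn from \cite{Meh,Meh1} and the rank-$k$ results of \cite{Yan2} cited in Section~\ref{Sec2}. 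Concretely, Remark~\ref{r1} writes $\widetilde{I}=I+UU^TJ=\prod_{j=1}^k(I+u_ju_j^TJ)$ as a product of $k$ rank-one symplectic factors, so one can either apply a rank-$k$ symplectic perturbation theorem directly, or iterate the rank-one symplectic perturbation theorem $k$ times.

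The first main step is the rank-one case applied to an eigenvalue $\lambda\notin\{-1,1\}$: a generic rank-one symplectic perturbation destroys exactly one largest Jordan block $\mathcal J_{n_1}(\lambda)$, i.e. it sends $\bigoplus_{j=1}^{l_1}\mathcal J_{n_1}(\lambda)\oplus(\text{rest})$ to $\bigoplus_{j=1}^{l_1-1}\mathcal J_{n_1}(\lambda)\oplus(\text{rest})$, with the lost block's eigenvalue absorbed into the "other eigenvalues" part $\widetilde{\mathcal J}$ (and simultaneously, by symplecticity, the reciprocal block at $1/\lambda$ is treated symmetrically; this is what keeps $\widetilde{\mathcal J}$ a legitimate symplectic complement). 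Iterating this $k$ times, each step removes one copy of the current largest block at $\lambda$: if $k\le l_1$ we remove $k$ copies of $\mathcal J_{n_1}(\lambda)$, giving the first displayed form; if $k>l_1$ we exhaust the $l_1$ blocks of size $n_1$, then start on the $l_2$ blocks of size $n_2$, and so on, which after writing $k=\sum_{s=1}^{i-1}l_s+k_i$ with $0<k_i\le l_i$ yields the second displayed form. The genericity is cumulative: at each of the $k$ stages the "bad" set of perturbation directions is a proper algebraic subvariety, and a finite union of proper subvarieties is still proper, so genericity with respect to the components of $U$ is preserved. Here one must be slightly careful that $\lambda(t)\notin\{-1,1\}$ for all $t>0$ guarantees we never fall into the exceptional $\pm1$ behaviour at any intermediate stage.

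Part (2) is the case $\lambda(t_0)\in\{1,-1\}$, where the symplectic structure forces parity constraints and the generic behaviour differs; here I would quote the corresponding symplectic rank-one perturbation result at a $\pm1$ eigenvalue. For (2a), when all the relevant block sizes $n_1,\dots,n_i$ are even, a generic rank-one symplectic perturbation again simply deletes one largest block, so the same iteration as in part (1) goes through and, with $k=\sum_{s=1}^{i-1}l_s+k_i$, deletes $k_i$ of the $l_i$ blocks of size $n_i$. For (2b), when $n_i$ is odd, the parity obstruction for symplectic matrices at $\pm1$ means odd-sized blocks at $\pm1$ come in pairs (forcing $l_i$ even), and a generic rank-one perturbation does not delete a block but \emph{merges} two Jordan blocks of odd size $n_i$ into one block of size $n_i+1$; consuming the count accordingly (each such merge uses an odd "budget", which is why the index reads $k=\sum_{s=1}^{i-1}l_s+2k_i-1$ with $2k_i\le l_i$) produces the stated form $\mathcal J_{n_i+1}(\lambda(t_0))\oplus\bigoplus_{j=1}^{l_i-2k_i}\mathcal J_{n_i}(\lambda(t_0))\oplus\cdots$. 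In each sub-case the residual blocks at eigenvalues $\ne\lambda(t_0)$, together with whatever the perturbation spins off, are collected into $\widetilde{\mathcal J}(t_0)$, and again genericity is preserved under the finite iteration. The hard part will be the careful bookkeeping of which Jordan blocks are removed versus merged at each iteration step and verifying that the "generic" exceptional locus at each stage is genuinely a proper subvariety of the remaining parameter space — i.e. that after performing $p$ perturbations the analysis of the $(p+1)$st reduces cleanly to the already-established rank-one symplectic statement applied to the updated Jordan data, which is exactly what the chain of equivalent systems (\ref{A1})--(\ref{A3}) in Section~\ref{Sec3} is set up to provide.
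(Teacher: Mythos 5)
Your proposal follows essentially the same route as the paper: both reduce to the identity $\widetilde{X}(t)=X(t)+B(t)=\prod_{j=1}^{k}(I+u_{k-j+1}u_{k-j+1}^TJ)X(t)$ and then iterate the rank-one symplectic perturbation theorem of \cite{DAK_16} $k$ times, with the exceptional locus at each stage a proper subvariety so that genericity survives the finite iteration. The only place your sketch is vaguer than the paper is the bookkeeping in case (2b): the paper makes explicit that the $2k_i-1$ rank-one steps alternate between a merge (two odd blocks $\mathcal{J}_{n_i}$ become one even block $\mathcal{J}_{n_i+1}$, via (2b)) and a deletion of that even block (via (2a)), ending on a merge, which is exactly why one copy of $\mathcal{J}_{n_i+1}$ survives and $2k_i$ copies of $\mathcal{J}_{n_i}$ are consumed.
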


\begin{proof}
we recall that the rank $k$  perturbation  $X(t)+B(t)$   of $X(t)$  can be put on the form
of  $k$  rank one  perturbation   $(X(t))_{t>0}$ by
$$
\widetilde{X}(t)=\left[ \prod_{j=1}^{k}\left(I+u_{k-j+1} u_{k-j+1}^TJ \right)\right]X(t)
$$
where each vector $u_j$ are  the columns of the matrix $U$.
\begin{enumerate}
  \item  If $\lambda(t)\not\in \left\{ -1,1\right\}$,
   $\forall  t\ge 0$,
  \begin{itemize}
    \item For $k\leq l_1$, we have (see \cite[Theorem 10]{DAK_16} ) :
      \begin{itemize}
   \item  $ \widetilde{X}_1= \left(I+u_1u_1^T\right)X(t)$  has the   following  Jordan  canonical form
        $$
  \left(\bigoplus_{j=1}^{l_1-1} \mathcal{J}_{n_1}(\lambda(t))\right)\oplus
  \left(\bigoplus_{j=1}^{l_2}\mathcal{J}_{n_2}(\lambda(t))\right)
  \oplus\cdots\oplus\left(
  \bigoplus_{j=1}^{l_{m(t)}}\mathcal{J}_{n_{m(t)}}(\lambda(t))\right)\oplus \widetilde{\mathcal{J}}_1(t),
  $$
    where  $ \widetilde{\mathcal{J}}_1(t)$ contains all  the Jordan blocks of $\widetilde{X}_1(t)$
    associated with  eigenvalues different from $\lambda(t)$.
    \item $ \widetilde{X}_2=\prod_{j=1}^{2}\left(I+u_{2-j+1}u_{2-j+1}^T\right)=\left(I+u_2u_2^T\right)(\left(I+u_1u_1^T\right)X(t)$  has
    the following  Jordan canonical form
      $$
  \left(\bigoplus_{j=1}^{l_1-2} \mathcal{J}_{n_1}(\lambda(t))\right)\oplus
  \left(\bigoplus_{j=1}^{l_2}\mathcal{J}_{n_2}(\lambda(t))\right)
  \oplus\cdots\oplus\left(
  \bigoplus_{j=1}^{l_{m(t)}}\mathcal{J}_{n_{m(t)}}(\lambda(t))\right)\oplus \widetilde{\mathcal{J}}_2(t),
  $$
  where  $ \widetilde{\mathcal{J}}_2(t)$ contains all  the Jordan blocks of $\widetilde{X}_2(t)$
    associated with  eigenvalues different from $\lambda(t)$.
    \item $ \widetilde{X}(t)=\widetilde{X}_k=\prod_{j=1}^{k}\left(I+u_{k-j+1}u_{k-j+1}^T\right)$ has
     the following  Jordan canonical form
      $$
  \left(\bigoplus_{j=1}^{l_1-k} \mathcal{J}_{n_1}(\lambda(t))\right)\oplus
  \left(\bigoplus_{j=1}^{l_2}\mathcal{J}_{n_2}(\lambda(t))\right)
  \oplus\cdots\oplus\left(
  \bigoplus_{j=1}^{l_{m(t)}}\mathcal{J}_{n_{m(t)}}(\lambda(t))\right)\oplus \widetilde{\mathcal{J}}_k(t),
  $$
  where  $\widetilde{\mathcal{J}}(t)=\widetilde{\mathcal{J}}_k(t)$  contains all   the Jordan blocks of $\widetilde{X}(t)$
    associated with  eigenvalues different from $\lambda(t)$.
    \end{itemize}
    \item  For $k=\sum_{s=1}^{i-1}l_s+k_i$  with $k_i\leq l_i$ ;
    \begin{itemize}
      \item if  $i=2$, then  $k=l_1+k_i$.  We have
      $$\widetilde{X}(t)=  \left[\prod_{j=1}^{k-l_1}\left(I+u_{k-j+1}u_{k-j+1}^T\right)\right].
     \underbrace{\left[\prod_{j=k-l_1+1}^{k}\left(I+u_{k-j+1}u_{k-j+1}^T\right)\right]X(t) }_{\widetilde{X}_{l_1}}$$
     where $\widetilde{X}_{l_1}(t)$  is $l_1$ rank one perturbations of $X(t)$ ; then the symplectic matrix
       $\widetilde{X}_{l_1}(t)$  therefore  has   the following  Jordan  canonical    form
     $$ \left(\bigoplus_{j=1}^{l_2} \mathcal{J}_{n_2}(\lambda(t))\right)\oplus
  \left(\bigoplus_{j=1}^{l_3}\mathcal{J}_{n_3}(\lambda(t))\right)
  \oplus\cdots\oplus\left(
  \bigoplus_{j=1}^{l_{m(t)}}\mathcal{J}_{n_{m(t)}}(\lambda(t))\right)\oplus \widetilde{\mathcal{J}}_{l_1}(t)
  $$
  using \cite[Theorem 10]{DAK_16}. On the other hand
     $\widetilde{X}(t)$ is $k_i$  rank one  perturbations  of $\widetilde{X}_{l_1}(t)$  with $k_1<l_2$ ;  it
  therefore has the following Jordan form
    $$ \left(\bigoplus_{j=1}^{l_2-k_i} \mathcal{J}_{n_2}(\lambda(t))\right)\oplus
  \left(\bigoplus_{j=1}^{l_3}\mathcal{J}_{n_3}(\lambda(t))\right)
  \oplus\cdots\oplus\left(
  \bigoplus_{j=1}^{l_{m(t)}}\mathcal{J}_{n_{m(t)}}(\lambda(t))\right)\oplus \widetilde{\mathcal{J}}_k(t) ;
  $$
      \item if  $i>2$. Putting $ \alpha(i)=\sum_{s=1}^{i-1}l_s$,  we have
       $$\widetilde{X}(t)=  \left[\prod_{j=1}^{k-\alpha(i)}\left(I+u_{k-j+1}u_{k-j+1}^T\right)\right].
     \underbrace{\left[\prod_{j=k-\alpha(i)+1}^{k}\left(I+u_{k-j+1}u_{k-j+1}^T\right)\right]X(t) }_{\widetilde{X}_{\alpha(i)}}$$
     where  $\widetilde{X}_{\alpha(i)}$  is    $\alpha(i)$ rank one perturbations of  $X(t)$.  Using \cite[Theorem 10]{DAK_16},
      the symplectic matrix
      $\widetilde{X}_{\alpha(i)}$ has the following Jordan form
      $$ \left(\bigoplus_{j=1}^{l_i} \mathcal{J}_{n_i}(\lambda(t))\right)\oplus
  \left(\bigoplus_{j=1}^{l_{i+1}}\mathcal{J}_{n_{i+1}}(\lambda(t))\right)
  \oplus\cdots\oplus\left(
  \bigoplus_{j=1}^{l_{m(t)}}\mathcal{J}_{n_{m(t)}}(\lambda(t))\right)\oplus \widetilde{\mathcal{J}}_{\alpha(i)}(t)
  $$
   where  $\widetilde{\mathcal{J}}_{\alpha(i)}(t)$
   contains all the Jordan blocks of $\widetilde{X}_{\alpha(i)}$
    associated with  eigenvalues different from $\lambda(t)$.
      On the other hand
     $\widetilde{X}(t)$ is $k_i$  rank one  perturbations  of $\widetilde{X}_{l_1}(t)$  with $k_1<l_2$ ;  it
  therefore has the following Jordan form
    $$ \left(\bigoplus_{j=1}^{l_i-k_i} \mathcal{J}_{n_i}(\lambda(t))\right)\oplus
  \left(\bigoplus_{j=1}^{l_{i+1}}\mathcal{J}_{n_{i+1}}(\lambda(t))\right)
  \oplus\cdots\oplus\left(
  \bigoplus_{j=1}^{l_{m(t)}}\mathcal{J}_{n_{m(t)}}(\lambda(t))\right)\oplus \widetilde{\mathcal{J}}_k(t)
  $$
   where  $\widetilde{\mathcal{J}}(t)= \widetilde{\mathcal{J}}_k(t)$
   contains all the Jordan blocks of $\widetilde{X}(t)$
    associated with  eigenvalues different from $\lambda(t)$.
    \end{itemize}
  \end{itemize}
  \item Consider that there exists   $t_0>0$ verifying  $\lambda(t_0)\in \{1,-1\}$.
  \begin{itemize}
    \item if  ${\displaystyle k=\sum_{s=1}^{i-1}l_s+k_i}$  with  $n_1,n_2,..., n_i$ are even and $k_i\leq l_i$, then
    using   \cite[Theorem 10, $(2a)$]{DAK_16},  we have : the symplectic matrix  $\widetilde{X}(t)$,
     $k$ rank one  perturbations of $X(t)$,  has the following canonical Jordan  form
         $$ \left(\bigoplus_{j=1}^{l_i-k_i} \mathcal{J}_{n_i}(\lambda(t))\right)\oplus
  \left(\bigoplus_{j=1}^{l_{i+1}}\mathcal{J}_{n_{i+1}}(\lambda(t))\right)
  \oplus\cdots\oplus\left(
  \bigoplus_{j=1}^{l_{m(t)}}\mathcal{J}_{n_{m(t)}}(\lambda(t))\right)\oplus \widetilde{\mathcal{J}}(t)
  $$
   where  $\widetilde{\mathcal{J}}(t) $
   contains all the Jordan blocks of $\widetilde{X}(t)$
    associated with  eigenvalues different from $\lambda(t)$.

    \item if ${\displaystyle  k=\sum_{s=1}^{i-1}l_s+2k_i-1}$ with $2k_i\leq l_i$ and  $n_i$  is odd, then we have
    \begin{itemize}
      \item  for $i=1$,  $k=2k_1-1$  and $n_1$ is  odd. According to $(2b)$ of \cite[Theorem 10]{DAK_16},
      $l_1$  is even  and we  have
      $$
\widetilde{X}(t)=\left[ \prod_{j=1}^{2k_1-1}\left(I+u_{2k_1-j} u_{2k_1-j}^TJ \right)\right]X(t)
$$
and step by step we have
     \begin{itemize}
       \item  $\widetilde{X}_1(t)=\left(I+u_1 u_1^TJ \right)X(t)$  has the following canonical
       Jordan form
       $$
     \mathcal{J}_{n_1+1}^{(1)}(\lambda(t_0)) \oplus\left(\bigoplus_{j=1}^{l_1-2} \mathcal{J}_{n_i}(\lambda(t_0))\right)
     \oplus\cdots\oplus
     \left(
  \bigoplus_{j=1}^{l_{m(t)}}\mathcal{J}_{n_{m(t)}}(\lambda(t_0))\right)\oplus \widetilde{\mathcal{J}}_1(t_0),
  $$

      where  $\widetilde{\mathcal{J}}_1(t_0)$   contains all the Jordan  blocks of  $\widetilde{X}_1(t_0)$
       associated with eigenvalues different   from      $\lambda(t_0)$.
       \item $\widetilde{X}_2(t)= \left(I+u_2 u_2^TJ\right) \left(I+u_1 u_1^TJ\right) X(t)$  has   the following canonical
       Jordan form
       $$
     \left(\bigoplus_{j=1}^{l_1-2} \mathcal{J}_{n_i}(\lambda(t_0))\right)
     \oplus\cdots\oplus
     \left(
  \bigoplus_{j=1}^{l_{m(t)}}\mathcal{J}_{n_{m(t)}}(\lambda(t_0))\right)\oplus \widetilde{\mathcal{J}}_2(t_0),
  $$
  using $(2a)$ of \cite[Theorem 10]{DAK_16}   because $n_1+1$ is even.
       \item $\widetilde{X}_3(t)=\left[ \prod_{j=1}^{3}\left(I+u_{4-j} u_{4-j}^TJ \right)\right]X(t)$  has
        the following canonical
       Jordan form
        $$
     \mathcal{J}_{n_1+1}^{(2)}(\lambda(t_0)) \oplus\left(\bigoplus_{j=1}^{l_1-2\times 2} \mathcal{J}_{n_1}(\lambda(t_0))\right)
     \oplus\cdots\oplus
     \left(
  \bigoplus_{j=1}^{l_{m(t)}}\mathcal{J}_{n_{m(t)}}(\lambda(t_0))\right)\oplus \widetilde{\mathcal{J}}_3(t_0),
  $$
      where  $\widetilde{\mathcal{J}}(t_0)$   contains all the  Jordan  blocks of  $\widetilde{X}_3(t_0)$
       associated with eigenvalues different   from
           $\lambda(t_0)$  using  $(2b)$ of \cite[Theorem 10]{DAK_16}.

       \item $\widetilde{X}(t)=\left[ \prod_{j=1}^{2k_1-1}\left(I+u_{2k_1-j} u_{2k_1-j}^TJ \right)\right]X(t)$ has
        the following canonical  Jordan form
        $$
     \mathcal{J}_{n_1+1}^{(k_1)}(\lambda(t_0)) \oplus\left(\bigoplus_{j=1}^{l_1-2k_1} \mathcal{J}_{n_1}(\lambda(t_0))\right)
     \oplus\cdots\oplus
     \left(
  \bigoplus_{j=1}^{l_{m(t)}}\mathcal{J}_{n_{m(t)}}(\lambda(t_0))\right)\oplus \widetilde{\mathcal{J}}_k(t_0),
  $$
    where  $\widetilde{\mathcal{J}}(t_0)=\widetilde{\mathcal{J}}_k(t_0)$   contains all the Jordan  blocks   of  $\widetilde{X}(t_0)$
       associated with eigenvalues different   from      $\lambda(t_0)$.

     \end{itemize}

      \item for $i=2$, $k=l_1+2k_2-1$   and $n_2$  odd  and we have
       $$\widetilde{X}(t)=  \left[\prod_{j=1}^{k-l_1}\left(I+u_{k-j+1}u_{k-j+1}^T\right)\right].
     \underbrace{\left[\prod_{j=k-l_1+1}^{k}\left(I+u_{k-j+1}u_{k-j+1}^T\right)\right]X(t) }_{\widetilde{X}_{l_1}}$$

      \begin{itemize}
        \item if $n_1$ is even, then using $(2a)$  \cite[Theorem 19]{DAK_16},
         $\widetilde{X}_{l_1}$  has  the following  Jordan canonical  form

     $$
    \left(\bigoplus_{j=1}^{l_2} \mathcal{J}_{n_2}(\lambda(t_0))\right)
     \oplus\cdots\oplus
     \left(
  \bigoplus_{j=1}^{l_{m(t)}}\mathcal{J}_{n_{m(t)}}(\lambda(t_0))\right)\oplus \widetilde{\mathcal{J}}_{l_1}(t_0),
  $$
  and using  the preceding point  $\widetilde{X}(t)$ has the following   Jordan  canonical  form
   $$
     \mathcal{J}_{n_2+1}^{(k_2)}(\lambda(t_0)) \oplus\left(\bigoplus_{j=1}^{l_2-2k_2} \mathcal{J}_{n_2}(\lambda(t_0))\right)
     \oplus\cdots\oplus
     \left(
  \bigoplus_{j=1}^{l_{m(t)}}\mathcal{J}_{n_{m(t)}}(\lambda(t_0))\right)\oplus \widetilde{\mathcal{J}}_k(t_0),
  $$
    where  $\widetilde{\mathcal{J}}(t_0)=\widetilde{\mathcal{J}}_k(t_0)$   contains all the Jordan  blocks of  $\widetilde{X}(t_0)$
       associated with eigenvalues different   from      $\lambda(t_0)$.

        \item if $n_1$  is odd then according $(2b)$ of \cite[Theorem 10]{DAK_16}, $l_1$ is even and
        we deduct that
           $\widetilde{X}_{l_1}$   also   has  the following form
          $$
    \left(\bigoplus_{j=1}^{l_2} \mathcal{J}_{n_2}(\lambda(t_0))\right)
     \oplus\cdots\oplus
     \left(
  \bigoplus_{j=1}^{l_{m(t)}}\mathcal{J}_{n_{m(t)}}(\lambda(t_0))\right)\oplus \widetilde{\mathcal{J}}_{l_1}(t_0),
  $$
   by successively applying $l_1$ rank one perturbations.

       Using again the previous point,  we deduct that
       $\widetilde{X}(t)$ has the  Jordan  canonical   form
   $$
     \mathcal{J}_{n_2+1}^{(k_2)}(\lambda(t_0)) \oplus\left(\bigoplus_{j=1}^{l_2-2k_2} \mathcal{J}_{n_2}(\lambda(t_0))\right)
     \oplus\cdots\oplus
     \left(
  \bigoplus_{j=1}^{l_{m(t)}}\mathcal{J}_{n_{m(t)}}(\lambda(t_0))\right)\oplus \widetilde{\mathcal{J}}_k(t_0),
  $$
    where  $\widetilde{\mathcal{J}}(t_0)=\widetilde{\mathcal{J}}_k(t_0)$   contains all the Jordan  blocks of  $\widetilde{X}(t_0)$
       associated with eigenvalues different   from      $\lambda(t_0)$.

      \end{itemize}
      \item for $i>2$, $n_i$  is odd. Whether $n_1$, $n_2$..... $n_{i-1}$  are  even or odd,
      using successively  $(2a)$ and $(2b)$ of \cite[Theorem 10]{DAK_16}, we deduct that
       $\widetilde{X}_{\alpha(i)}$   also   has  the following  Jordan  canonical  form
          \begin{equation}\label{FE}
    \left(\bigoplus_{j=1}^{l_i} \mathcal{J}_{n_i}(\lambda(t_0))\right)
     \oplus\cdots\oplus
     \left(
  \bigoplus_{j=1}^{l_{m(t)}}\mathcal{J}_{n_{m(t)}}(\lambda(t_0))\right)\oplus \widetilde{\mathcal{J}}_{\alpha(i-1)}(t_0),
  \end{equation}
   by successively applying ${\displaystyle \alpha(i)=\sum_{s=1}^{i-1}l_s}$ rank one perturbations.
   Since $n_i$  is odd, we affirm,  using  $(2b)$ of \cite[Theorem  10]{DAK_16},   that
   $l_i$  is even.
      To end,  using the preceding point,  we  deduct that
         $\widetilde{X}(t)$ has the  canonical Jordan  form
   $$
     \mathcal{J}_{n_i+1}^{(k_i)}(\lambda(t_0)) \oplus\left(\bigoplus_{j=1}^{l_i-2k_i} \mathcal{J}_{n_i}(\lambda(t_0))\right)
     \oplus\cdots\oplus
     \left(
  \bigoplus_{j=1}^{l_{m(t)}}\mathcal{J}_{n_{m(t)}}(\lambda(t_0))\right)\oplus \widetilde{\mathcal{J}}_k(t_0),
  $$
    where  $\widetilde{\mathcal{J}}(t_0)=\widetilde{\mathcal{J}}_k(t_0)$   contains all the Jordan  blocks of  $\widetilde{X}(t_0)$
       associated with eigenvalues different   from      $\lambda(t_0)$.
    \end{itemize}
  \end{itemize}
\end{enumerate}

\end{proof}

\begin{rem}
In   point $(2)$ of  \cite[Theorem 10]{DAK_16}, if ${\displaystyle k=\sum_{s=1}^{i-1}l_s+2k_i}$  with $2k_i\leq l_i$  and  $n_i$  is odd,
then $l_i$ is even  and generally with respect to  the components  of  $U$,  the rank $k$ perturbation
$\widetilde{X}(t_0)=X(t_0)+B(t_0)$  of $X(t_0)$, has
the canonical Jordan  form

$$
\left(\bigoplus_{j=1}^{l_i-2k_i} \mathcal{J}_{n_i}(\lambda(t_0))\right)
     \oplus\cdots\oplus
     \left(
  \bigoplus_{j=1}^{l_{m(t_0)}}\mathcal{J}_{n_{m(t_0)}}(\lambda(t_0))\right)\oplus \widetilde{\mathcal{J}}_k(t_0),
  $$
    where  $\widetilde{\mathcal{J}}(t_0)=\widetilde{\mathcal{J}}_k(t_0)$   contains all the Jordan   blocks of  $\widetilde{X}(t_0)$
       associated with eigenvalues different   from      $\lambda(t_0)$.
\end{rem}

From points $(2a)$ and $(2b)$  of  \cite[Theorem  10]{DAK_16},  we deduce the following corollary
\begin{cor}
Suppose  there exists $t_0>0$  such  that  $\lambda(t_0)\in \{1,-1\}$.
  If  ${\displaystyle k= \sum_{s=1}^{i-1}l_s+k_i}$  and   $n_i$  is even with  $k_i\leq l_i$,
     then generically  with   respect to   the components of $U$,  the matrix
     $X(t_0)+B(t_0)$   has  the Jordan  canonical form
      \begin{equation}\label{COR}
      \left(\bigoplus_{j=1}^{l_i-k_i} \mathcal{J}_{n_i}(\lambda(t_0))\right)\oplus
  \left(\bigoplus_{j=1}^{l_{i+1}}\mathcal{J}_{n_{i+1}}(\lambda(t_0))\right)
      \oplus\cdots\oplus
      \left(
  \bigoplus_{j=1}^{l_{m(t_0)}}\mathcal{J}_{n_{m(t_0)}}(\lambda(t_0))\right)\oplus \widetilde{\mathcal{J}}(t_0),
      \end{equation}

      where   $\widetilde{\mathcal{J}}(t_0)$   contains all   the Jordan blocks  of   $X(t_0)+B(t_0)$
      associated with eigenvalues  different  from   $\lambda(t_0)$.
\end{cor}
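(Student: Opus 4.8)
The plan is to deduce the corollary directly from \cite[Theorem 10]{DAK_16}, by splitting the rank $k$ perturbation $\widetilde{X}(t_0)=X(t_0)+B(t_0)$ into $k$ successive rank one perturbations and regrouping them, exactly as in the proof of Theorem \ref{THR}, but this time relaxing the hypothesis on $n_1,\dots,n_{i-1}$. Set $\alpha(i)=\sum_{s=1}^{i-1}l_s$, so that $k=\alpha(i)+k_i$; recalling that $\widetilde{X}(t_0)=\big[\prod_{j=1}^{k}(I+u_{k-j+1}u_{k-j+1}^TJ)\big]X(t_0)$, I would write
$$
\widetilde{X}(t_0)=\left[\prod_{j=1}^{k_i}\left(I+u_{k-j+1}u_{k-j+1}^TJ\right)\right]\cdot\underbrace{\left[\prod_{j=k_i+1}^{k}\left(I+u_{k-j+1}u_{k-j+1}^TJ\right)\right]X(t_0)}_{\widetilde{X}_{\alpha(i)}(t_0)},
$$
so that $\widetilde{X}_{\alpha(i)}(t_0)$ is an $\alpha(i)$-fold rank one perturbation of $X(t_0)$ and $\widetilde{X}(t_0)$ is a $k_i$-fold rank one perturbation of $\widetilde{X}_{\alpha(i)}(t_0)$. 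When $i=1$ the first group is empty and the statement is just case $(2a)$ of \cite[Theorem 10]{DAK_16} applied to $X(t_0)$.

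First I would determine the Jordan form of $\widetilde{X}_{\alpha(i)}(t_0)$, clearing the block sizes $n_1,\dots,n_{i-1}$ one level at a time. If $n_s$ is even, case $(2a)$ of \cite[Theorem 10]{DAK_16} shows that applying $l_s$ rank one perturbations deletes all $l_s$ copies of $\mathcal{J}_{n_s}(\lambda(t_0))$. If $n_s$ is odd, then $l_s$ is even (the parity constraint on the Jordan structure of a symplectic matrix at the real eigenvalue $\lambda(t_0)\in\{1,-1\}$, already used in case $(2b)$ of \cite[Theorem 10]{DAK_16}), and the even-count case, namely point $(2)$ of \cite[Theorem 10]{DAK_16} recalled in the Remark above, shows that applying $l_s=2\cdot(l_s/2)$ rank one perturbations again deletes all $l_s$ copies. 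Iterating over $s=1,\dots,i-1$, and using that a finite intersection of generic (Zariski-dense open) conditions on the components of $U$ is again generic, $\widetilde{X}_{\alpha(i)}(t_0)$ generically has Jordan canonical form
$$
\left(\bigoplus_{j=1}^{l_i}\mathcal{J}_{n_i}(\lambda(t_0))\right)\oplus\left(\bigoplus_{j=1}^{l_{i+1}}\mathcal{J}_{n_{i+1}}(\lambda(t_0))\right)\oplus\cdots\oplus\left(\bigoplus_{j=1}^{l_{m(t_0)}}\mathcal{J}_{n_{m(t_0)}}(\lambda(t_0))\right)\oplus\widetilde{\mathcal{J}}_{\alpha(i)}(t_0).
$$

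Then I would apply the remaining $k_i$ rank one perturbations. The largest Jordan block of $\widetilde{X}_{\alpha(i)}(t_0)$ at $\lambda(t_0)$ is now $\mathcal{J}_{n_i}(\lambda(t_0))$, of even size $n_i$ and multiplicity $l_i\ge k_i$, so case $(2a)$ of \cite[Theorem 10]{DAK_16}, applied with the single even size $n_i$, shows that these $k_i$ perturbations generically delete exactly $k_i$ of the $l_i$ copies of $\mathcal{J}_{n_i}(\lambda(t_0))$ while leaving the blocks of sizes $n_{i+1},\dots,n_{m(t_0)}$ untouched (and possibly altering the part carried by eigenvalues $\ne\lambda(t_0)$). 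Composing with the previous step gives the form (\ref{COR}), with $\widetilde{\mathcal{J}}(t_0)$ collecting all Jordan blocks of $\widetilde{X}(t_0)$ belonging to eigenvalues other than $\lambda(t_0)$.

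I expect the only delicate points to be, first, the odd-$n_s$ part of the reduction: one must invoke that, for a symplectic matrix, Jordan blocks of odd size at an eigenvalue in $\{1,-1\}$ occur with even multiplicity, which is precisely what makes the even-count case of \cite[Theorem 10]{DAK_16} available and lets it clear such a level completely; and second, the persistence of genericity through the $i-1$ successive groupings, which holds because each invocation of \cite[Theorem 10]{DAK_16} excludes only a proper algebraic subset of the parameter space of $U$ and a finite union of such subsets is still proper. Everything else is the bookkeeping of the index shifts already carried out in the proof of Theorem \ref{THR}.
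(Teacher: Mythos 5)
Your proposal is correct and follows essentially the same route as the paper's own proof: split $\widetilde{X}(t_0)$ into the $\alpha(i)$ rank one perturbations that clear the levels $n_1,\dots,n_{i-1}$ (using case $(2a)$ for even sizes and the even-multiplicity fact for odd sizes) and then apply case $(2a)$ with the even size $n_i$ to the remaining $k_i$ perturbations. The only differences are presentational — you treat all $i$ uniformly where the paper case-splits on $i=1$, $i=2$, $i>2$, and you make the persistence of genericity explicit, which the paper leaves implicit.
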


\begin{proof}
\begin{itemize}
  \item If $i=1$, we $k=k_1$  and $n_1$ is even. Thus,  according to $(2a)$ of  Theorem \ref{THR},
  $\widetilde{X}_k(t_0)$ has  the Jordan canonical form

   $$
      \left(\bigoplus_{j=1}^{l_1-k_1} \mathcal{J}_{n_1}(\lambda(t_0))\right)\oplus
  \left(\bigoplus_{j=1}^{l_{2}}\mathcal{J}_{n_{2}}(\lambda(t_0))\right)
      \oplus\cdots\oplus
      \left(
  \bigoplus_{j=1}^{l_{m(t_0)}}\mathcal{J}_{n_{m(t_0)}}(\lambda(t_0))\right)\oplus \widetilde{\mathcal{J}}(t_0),
      $$
      where   $\widetilde{\mathcal{J}}(t_0)$   contains all   the Jordan blocks  of   $X(t_0)+B(t_0)$
      associated with eigenvalues  different  from   $\lambda(t_0)$.
    \item If $i=2$, we have $k=l_1+k_2$ (with $k_2\leq l_2$)  and $n_2$  is even. Thus
    $$
      \widetilde{X}_k(t_0)= \prod_{j=1}^{(k-l_1)}(I+u_{k-j+1}u_{k-j+1}^TJ)
      \underbrace{\prod_{j=1}^{l_1}(I+u_{l_1-j+1}u_{l_1-j+1}^TJ)X(t_0)}_{\widetilde{X}_{l_1}(t_0)}
      $$
      \begin{itemize}
        \item  if $n_1$  is even then according to $(2a)$ of Theorem \ref{THR}, $\widetilde{X}_{l_1}(t_0)$  has
        the Jordan canonical form
          \begin{equation}\label{COP}
  \left(\bigoplus_{j=1}^{l_{2}}\mathcal{J}_{n_{2}}(\lambda(t_0))\right)
      \oplus\cdots\oplus
      \left(
  \bigoplus_{j=1}^{l_{m(t_0)}}\mathcal{J}_{n_{m(t_0)}}(\lambda(t_0))\right)\oplus \widetilde{\mathcal{J}}_{l_1}(t_0),
       \end{equation}
      where $\widetilde{\mathcal{J}}_{l_1}(t_0)$  contains all the Jordan blocks of $\widetilde{X}_{l_1}(t_0)$ associated
      with eigenvalues different from $\lambda(t_0)$. However
        $\widetilde{X}_{k}(t_0)$  is $k_2$ rank one perturbations  of   $\widetilde{\mathcal{J}}_{l_1}(t_0)$ ;
         thus according to $(2a)$ of Theorem \ref{THR}, the Jordan canonical form of $\widetilde{X}_k(t_0)$  is given by
        (\ref{COR}).

        \item if  $n_1$  is odd then according to  $(2b)$ of Theorem \ref{THR}, the  Jordan canonical form of
        $\widetilde{X}_{l_1}(t_0)$ is given by  \ref{COP}. Moreover  $n_2$ being even and $\widetilde{X}_k$ being
        $k$ rank one perturbations of $\widetilde{X}_{l_1}(t_0)$, we obtain that  the Jordan canonical form  of  $\widetilde{X}_k(t_0)$
        is given  by  (\ref{COR})  using $(2a)$  of Theorem \ref{THR}.

      \end{itemize}
  \item If $i>2$ then we have  ${\displaystyle k=\sum_{s=1}^{i-1}l_s+k_i}$ with $k_i\leq l_1$  and $n_i$  even.
  Thus
   $$
      \widetilde{X}_k(t_0)= \prod_{j=1}^{(k-\alpha(i-1))}(I+u_{k-j+1}u_{k-j+1}^TJ)
      \underbrace{\prod_{j=1}^{\alpha(i-1)}(I+u_{\alpha(i-1)-j+1}u_{\alpha(i-1)-j+1}^TJ)X(t_0)}_{\widetilde{X}_{\alpha(i-1)}(t_0)}
      $$
      where ${\displaystyle \alpha(i)=\sum_{j=1}^{i}l_s}$. \\
      From $(2a)$  and $(2b)$  of Theorem \ref{THR} and the above, the Jordan canonical form of
      $\widetilde{X}_{\alpha(i-1)}(t_0)$ is given by  (\ref{FE}).  Thus
       applying   $(2a)$ of Theorem \ref{THR}  to symplectic matrix
       $\widetilde{X}_{\alpha(i-1)}(t_0)$, we obtain  the Jordan canonical form
        (\ref{COR})  of $\widetilde{X}_k(t_0)$.
\end{itemize}

\end{proof}


\section{Algorithm  and numerical examples}\label{Sec5}

We start to recall the following two rotation matrices \cite{kres_05,kres_46}
$$
G_{j,j+N}=\left(
            \begin{array}{ccccc}
              I_{j-1} &  &  &  &  \\
               & \cos(\theta) &  & \sin(\theta) & \\
               &  & I_{N-1} &  &  \\
               & -\sin(\theta) &  & \cos(\theta) &  \\
               &  &  &  & I_{N-j} \\
            \end{array}
          \right), \qquad 1\leq j\leq N,
$$
for some  $\theta \in [-\frac{\pi}{2},\frac{\pi}{2}[ $  and  the direct  sum of two
identical $N\times N$ Householder  matrices
$$
\left(H_j\oplus H_j \right)(v,\,\beta)=\left[
\begin{array}{cc}
I_N-\beta vv^T& \\
&\\
 &I_N-\beta vv^T
\end{array}
\right],
$$
where $v$  is a vector  of length  $N$  with its  first $j-1$ elements
 equal to  zero  and  $\beta$  a scalar   satisfying
  $\beta(\beta v^Tv-2)=0$.  The  symbol $\oplus$  denotes  the  direct
sum  of  matrices.
 From  these matrices, we propose  Algorithm  \ref{Algo}
  which is the synthesis of  Algorithms $23,$ and $24$ of \cite{kres_46}.
  This Algorithm   determines
a basis of an isotropic subspace  from a random matrix.
 \begin{algo}[Computation  of isotropic  subspace]~~\\

 \begin{enumerate}
   \item[] {\bf Imput :} $A\in \Rset^{2N\times k}$, with $N\ge k$.
   \item[] {\bf Output :}   $\mathcal{U}\in \Rset$  isotropic  subspace.
     \begin{enumerate}
       \item $Q =I_{2N}$
       \item for  $j=1,...,k$
       \begin{itemize}
         \item Let $x=Ae_j$
         \item Determine $v\in \Rset^N$  and   $\beta \in \Rset$  such that  the last $N-j$ elements  of
         $$x=\left(H_j \oplus  H_j\right)(v,\beta)x$$  are zero
         \item Determine  $\theta\in [-\frac{\pi}{2},\frac{\pi}{2}[$  such that  $(N+j)$th  element  of
         $x=G_{j,j+k}(\theta)$  is   zero
         \item Dertemine  $\omega\in \Rset^N$  and  $\gamma \in \Rset$  such that  the $(j+1)$th  to  the  kth
         elements  of $$x=\left(H_j \oplus  H_j\right)(\omega,\gamma)x$$  are zero
         \item compute  $ E_j(x)=(H_j\oplus H_j)(v,\beta) G_{j,j+k}
                          (\theta)(H_j\oplus H_j)(w,\gamma)$. \item Put  $A= E^T_j(x)A$  and  $Q=QE_j(x)$
       \end{itemize}
       \item $\mathcal{U}=span(Q(:,1:k))$
     \end{enumerate}
 \end{enumerate}
     \label{Algo}
 \end{algo}

In the following examples  we  show  that any rank k perturbation
 of the solution of (\ref{Eq2}) is  the solution of (\ref{er4}).
 The software used for calculating and plotting the curves of the examples below is
  MATLAB 7.9.0(R2009b).

\begin{exe}\label{e1}
Consider the system of differential equations (see \cite[Vol.2,P.412]{YS})
\begin{equation}\label{Eq11}
\left\{\begin{array}{lll}

q_1\dfrac{d^2\eta_1}{dt^2}+p_1\eta_1+\left[\epsilon\eta_1\cos(\gamma
t)+(\delta\cos(2\gamma
t)+c\sin(2\gamma t))\eta_2\right]=0\\
&&\\
q_2\dfrac{d^2\eta_2}{dt^2}+p_2\eta_2+\left[g\eta_3\sin(5\gamma t)\right]=0\\
&&\\
q_3\dfrac{d^2\eta_3}{dt^2}+p_3\eta_3+\left[(\delta \cos(2\gamma
t)+c\sin(2\gamma t))\eta_1+g\eta_2\sin(5\gamma t)\right]=0

\end{array}
\right.
\end{equation}
 which can be written down as
\begin{equation}\label{Eq12}
\dfrac{d^2\eta}{dt^2}+P(t)\eta=0
\end{equation}
with
$$
\eta=\left(
\begin{array}{c}
\dfrac{\eta_1}{\sqrt{q_1}}\\\\
\dfrac{\eta_2}{\sqrt{q_2}}\\\\
\dfrac{\eta_3}{\sqrt{q_3}}
\end{array}
\right)\quad \text{and}\quad  P(t)=\left(
\begin{array}{ccc}
\frac{p1+\epsilon\cos(\gamma t)}{q_{1}}&0&\frac{\delta\cos(2\gamma
t)+c\sin(2\gamma t)}{\sqrt{q1q3}}\\ \\
0&\frac{p_{2}}{q_{2}}& \frac{g\sin(5\gamma
t)}{\sqrt{q_{1}q_{2}}})\\\\ \frac{\delta\cos(2\gamma
t)+c\sin(2\gamma t)}{\sqrt{q_{1}q_{3}}}&\frac{g\sin(5\gamma
t)}{\sqrt{q_{1}q_{2}}}&\frac{p_{3}}{q_{3}}
\end{array}
\right)
$$

 Putting

$$
X(t)=\left(
\begin{array}{c}
\eta(t)\\
\dfrac{d\eta(t)}{dt}
\end{array}
\right), \quad
 J=\left(
\begin{array}{lll}
0_3& &-I_3\\
I_3&&0_3
\end{array}
\right),\quad \text{and}\quad  H(t)=\left(
\begin{array}{cc}
P(t)&0_3\\
&\\
 0_3&I_3
\end{array}
\right).
$$
 We get a canonical Hamiltonian system
\begin{equation}\label{Eq13}
J\dfrac{dX(t)}{dt}=H(t)X(t),\quad X(0)=I_6
\end{equation}
where $H(t)=H(t+\frac{2\pi}{\sqrt{7}})=H^T(t)$.  In  this example, we take $\gamma=
\sqrt{7},\; q_1=q_2=q_3=1,\;p_1=4,\;p_2=3,\; p_3=2,$
$a=g=\epsilon,\; b=\delta\;\text{and\;} c=0.$

From a random matrix $A\in \Rset^{6\times 3}$, we deduce a matrix $U\in \Rset^{6\times k}$
 of rank $k\leq 3$
whose columns generate an isotropic subspace using algorithm \ref{Algo}

 Consider the perturbed system
(\ref{er4}) of (\ref{Eq13}). We show that the rank $k=2,3$
perturbation of the fundamental solution of (\ref{Eq13}) is the
solution of perturbed system (\ref{er4}). For that, consider

$$
\Psi(t)=\|\widetilde{X}(t)-X_1(t)\|,\, \forall\, t\geq 0
$$
where $\widetilde{X}(t)$ is the solution of (\ref{er4}), and
$X_1(t)=(I+UU^TJ)X(t)$.   We show by numerical examples that
$\Psi(t)$ is very close to zero,
$\forall\,t\in[0,\,\frac{2\pi}{\sqrt{7}}]$.

\begin{itemize}
\item[$\bullet$] for $\epsilon=2$ and $\delta=4,$
consider the random  matrix
$
A={\small \left[
    \begin{array}{ccc}
      0.8147 & 0.2785 & 0.9572 \\
      0.9058 & 0.5469 & 0.4854 \\
      0.1270 & 0.9575 & 0.8003 \\
      0.9134 & 0.9575 & 0.1419 \\
      0.6324 & 0.1576 & 0.4218 \\
      0.0975 & 0.9706 & 0.9157 \\
    \end{array}
  \right].}
$
Using  Algorithm  \ref{Algo} to matrix $A$, we obtain  the matrix
  $
V={\small \left[\begin{array}{ccc}
-0.4918& 0.1282& 0.4009\\
-0.5468& 0.0030& -0.3293\\
-0.0767& -0.6566& 0.1582\\
-0.5514& -0.1635& -0.5002\\
-0.3818& 0.3003 &0.5972\\
-0.0589& -0.6599& 0.3146
 \end{array}\right]}
$
whose columns span  an isotropic subspace.

\begin{itemize}
  \item  Let's take $U=V(:,1:2)$.
In Figure \ref{a0}, we consider the matrix $U$ of rank $2$   which
permits to perturb system (\ref{Eq13}) by the matrices
$U,\; 10^{-1}U,\;10^{-2}U,\;\text{and\;}10^{-3}U.$  We remark that all the
figures  are  so  that  $\Psi(t)\leq 3.5 \times
10^{-14}$.  This proves  that
$\widetilde{X}(t)\equiv X_1(t), \;\forall\;t\in
[0,\;\frac{2\pi}{\sqrt{7}}]$.

\begin{figure}[h!]
\center
\psfig{figure=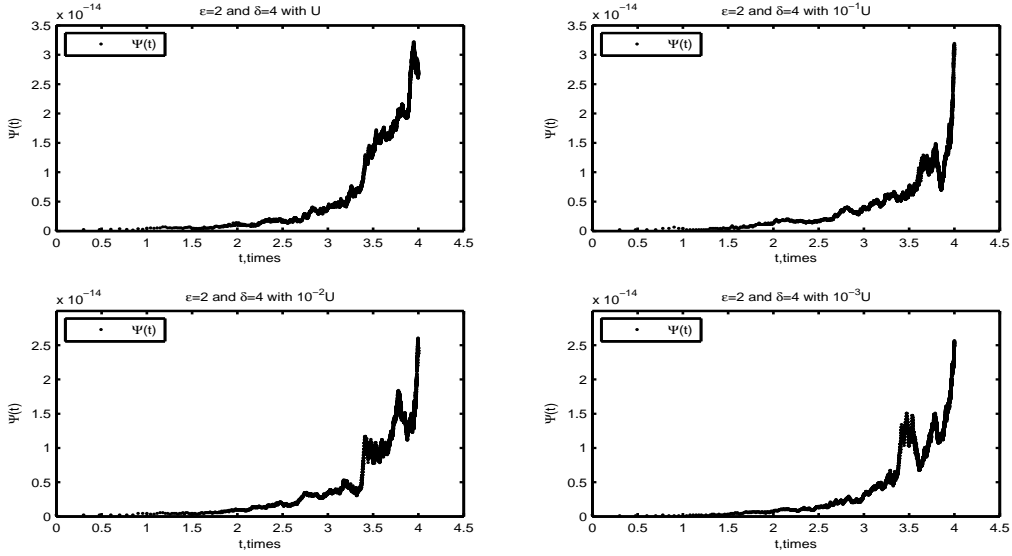,height=80mm,width=160mm}
\caption{Comparison of two solutions (Example 1)}
 \label{a0}
\end{figure}

However, unperturbed system (\ref{Eq13}) is strongly stable. We remark that the rank $2$ perturbed system
 (\ref{er4}) of (\ref{Eq13}) is unstable for the matrix $U$ of rank $2$ and remains strongly stable
 for a matrix  taken in $\{10^{-1}U,\;10^{-2}U,\;10^{-3}U\}$.
  Table \ref{t011} gives the different norms of projectors, the quantity $\delta_S$ and a convergence illustration  of $S^{(n)}.$

 \begin{table}[h!]
\caption{Checking  of the (strong) stability of (\ref{er4}) by the approachs defined in \cite{Dos-Sad_13,dos-cou-sam_13} (Example 1)}
\center
{\small \begin{tabular}{lllllll}
\hline
 & & U  & $10^{-1}U$ & $10^{-2} U$ & $10^{-3}$  & $U\equiv 0$\\
\hline
$\|S^{(n)}\|$ &  &  $5.4202 \times 10^{+33}$  & 7.9357 &  7.9838 & 7.9842 & 7.9842\\
            $\delta_S$ & & -  & 0.3645 & 0.3626 &  0.3625  & 0.3625\\
            \hline
$tr(\PXz)$ &  & $1$ & $-2.6908\times 10^{-34}$ & $5.3580\times 10^{-35}$ & $1.0632\times 10^{-34}$ & $1.9780\times 10^{-34}$\\
$\|\PXz^2-\PXz\|_2$ &  & $1.2741\times 10^{-16}$  & $1.2611\times 10^{-34}$ & $2.7797\times 10^{-34}$ & $1.8430\times 10^{-34}$ & $2.5227 \times 10^{-34}$ \\
$tr(\PXi)$  & & 1 & 0 & 0 & 0 & 0\\
$\|\PXi^2-\PXi\|_2 $ & & $3.8592\times 10^{-16}$ & $2.1197\times 10^{-35}$ & $2.1197\times 10^{-35}$ & $3.3161\times 10^{-35}$ & $2.7261 \times 10^{-35}$\\
$tr(\PXr)$ & & - & 0 & 0 & 0 & 0\\
$\|\PXr^2-\PXr\|_2$ & & - & 0 & 0 & 0 & 0\\
$tr(\PXg)$ & & - & 6 & 6 & 6 & 6\\
$\|\PXg^2-\PXg\|_2$ &  & -  &  $3.4285\times 10^{-19}$ &  $3.4285\times 10^{-19}$ &  $1.1102\times 10^{-16}$ & 0\\
$\|\PXr+\PXg-I_6\|$ & & - & $3.4285\times 10^{-19}$ & $3.4285\times 10^{-19}$ & $1.1102\times 10^{-16}$ & 0\\
            \hline
\end{tabular}}
\label{t011}
\end{table}

Table \ref{t011} justifies the existence of a neighborhood in which any rank $2$ perturbation
of the system remains strongly stable.

\item  In Figure \ref{a1}, we consider $U=V$ to perturb  system (\ref{Eq13}). We can
see  that $\Psi(t)\leq 6\times 10^{-14}$  for all the figures. This shows  that
$\widetilde{X}(t)\equiv X_1(t), \;\forall\;t\in
[0,\;\frac{2\pi}{\sqrt{7}}]$.

\begin{figure}[h!]
\center
\psfig{figure=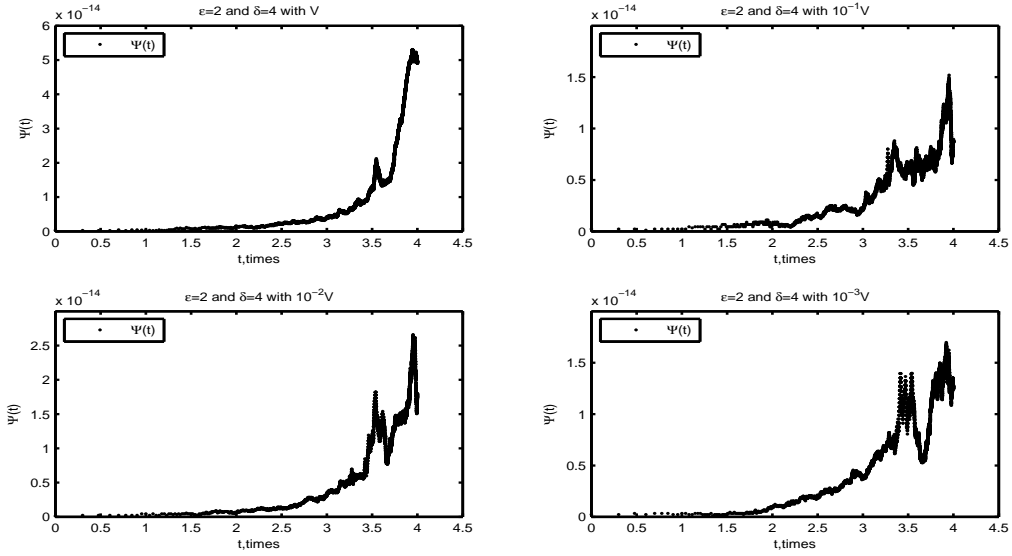,height=80mm,width=160mm}
\caption{Comparison of two solutions (Example 1)}
 \label{a1}
\end{figure}
\end{itemize}

In this example,  the unperturbed system  is strongly  stable for all $U$  taken in  $\{10^{-1}V,10^{-2}V,10^{-3}V\}$
and not stable when $U=V$. This is illustrated in  Table \ref{t012} which gives the norms  of different projectors,
the quantity $\delta$  and a convergence illustration of $S^{(n)}$.

 \begin{table}[h!]
\caption{Checking  of the (strong) stability of (\ref{er4}) by the approachs defined in \cite{Dos-Sad_13,dos-cou-sam_13} (Example 1)}
\center
{\small \begin{tabular}{lllllll}
\hline
 & & U  & $10^{-1}U$ & $10^{-2} U$ & $10^{-3}$  & $U\equiv 0$\\
\hline
$\|S^{(n)}\|$ &  &  $9.1853 \times 10^{+33}$  & 7.9544 &  7.9839 & 7.9842 & 7.9842\\
            $\delta_S$ & & -  & 0.3645 & 0.3626 &  0.3625  & 0.3625\\
            \hline
$tr(\PXz)$ &  & $1$ & $2.4361\times 10^{-35}$ & $1.7323\times 10^{-35}$ & $8.9556\times 10^{-35}$ & $1.9780\times 10^{-34}$\\
$\|\PXz^2-\PXz\|_2$ &  & $1.3362\times 10^{-16}$  & $2.3274\times 10^{-34}$ & $1.4722\times 10^{-34}$ & $1.2520\times 10^{-34}$ & $2.5227 \times 10^{-34}$ \\
$tr(\PXi)$  & & 1 & 0 & 0 & 0 & 0\\
$\|\PXi^2-\PXi\|_2 $ & & $3.6937\times 10^{-16}$ & $2.6155\times 10^{-35}$ & $8.4412\times 10^{-35}$ & $3.4485\times 10^{-35}$ & $2.7261 \times 10^{-35}$\\
$tr(\PXr)$ & & - & 0 & 0 & 0 & 0\\
$\|\PXr^2-\PXr\|_2$ & & - & 0 & 0 & 0 & 0\\
$tr(\PXg)$ & & - & 6 & 6 & 6 & 6\\
$\|\PXg^2-\PXg\|_2$ &  & -  &  $5.4210\times 10^{-20}$ &  $2.5411\times 10^{-21}$ &  $1.1102\times 10^{-16}$ & 0\\
$\|\PXr+\PXg-I_6\|$ & & - & $5.4210\times 10^{-20}$ & $2.5411\times 10^{-21}$ & $1.1102\times 10^{-16}$ & 0\\
            \hline
\end{tabular}}
\label{t012}
\end{table}

The second Table  justifies the existence of a neighborhood in which any rank $3$ perturbation
of the system remains strongly stable.

\item[$\bullet$] for  $\epsilon=15$ and $\delta=4$, we consider the random matrix
$A={\small  \left[
     \begin{array}{ccc}
       0.7482 & 0.8258 & 0.9619 \\
       0.4505 & 0.5383 & 0.0046 \\
       0.0838 & 0.9961 & 0.7749 \\
       0.2290 & 0.0782 & 0.8173 \\
       0.9133 & 0.4427 & 0.8687 \\
       0.1524 & 0.1067 & 0.0844 \\
     \end{array}
   \right]}$.
    Using  Algorithm  \ref{Algo} to  matrix $A$,  we obtain  the matrix
   $V={\small \left[
        \begin{array}{ccc}
          -0.5773 & -0.1332 & 0.4709 \\
          -0.3476 & 0.1520 & 0.1331 \\
          -0.0647 & -0.9504 & -0.2474 \\
          -0.1767 & -0.1538 & 0.6077 \\
          -0.7047 & 0.1706 & -0.5591 \\
          -0.1176 & -0.0103 & -.1320\\
        \end{array}
      \right]}$
      of  rank $3$ whose columns  span  an isotropic subspace.

\begin{itemize}
  \item Let's take $U=V(:,1:2)$.
 Figure \ref{a1} is obtained for values of any matrix of
rank $2$ taken in $\{U,\;10^{-1}U,\;10^{-2}U,\;10^{-3}U\}.$
 We remark that all the figures of Figure
 \ref{a1},   verify
$\Psi(t)\leq 1.4\times 10^{-12}$.
This shows that $\widetilde{X}(t)\equiv X_1(t),\;
\forall\,t\in[0,\;\frac{2\pi}{\sqrt{7}}].$
\begin{figure}[h!]
\center
\psfig{figure=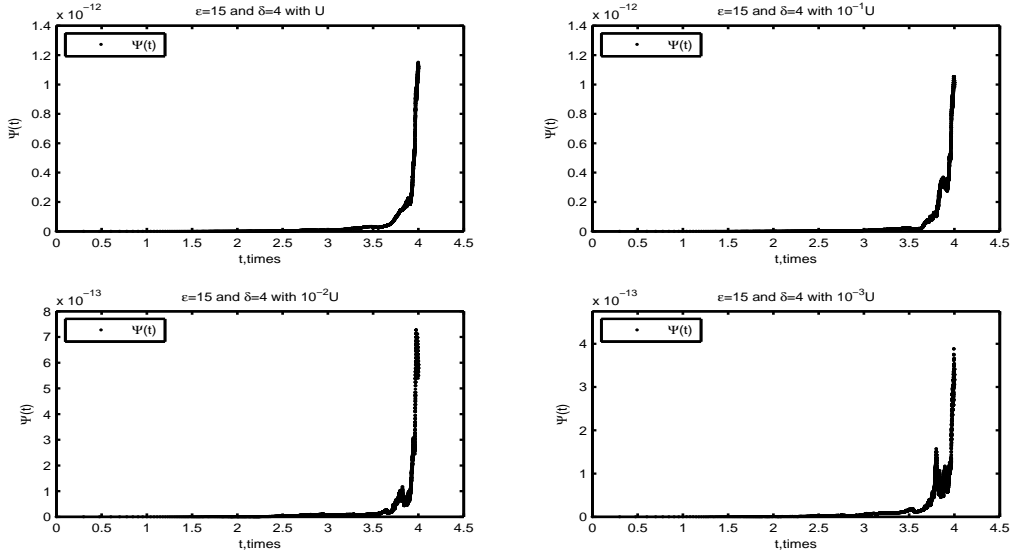,height=80mm,width=160mm}
\caption{Comparison of two solutions.}
 \label{a1}
\end{figure}

In this example, the unperturbed system is unstable, and the rank $2$ perturbation
systems remain unstable for any matrix of rank $2$ taken in $\{U,\;10^{-1}U,\;10^{-2}U,\;10^{-3}U\}$.
This is illustrated in following Table \ref{t013}

\begin{table}[h!]
\caption{Checking  of the (strong) stability of (\ref{er4}) by the dichotomy approach (Example 1)}
\center
{\small \begin{tabular}{lllllll}
\hline
 & & U & $10^{-1}U$ & $10^{-2} U$ & $10^{-3}$  & $U \equiv 0$\\
\hline
$\|S^{(n)}\|$ & & $2.1415\times 10^{+48}$ & $7.8057\times 10^{+32}$ &  $1.8698\times 10^{+35}$ & $1.9709\times 10^{+35}$ & $7.7999\times 10^{+41}$\\
$\delta_S$ & & 0  & 0 & 0 &  0 & 0\\
            \hline
$tr(\PXz)$ & &2 & 2 & 2 & 2 & 2\\
$\|\PXz^2-\PXz\|_2$& & $7.1061 \times 10^{-16}$  & $3.7032 \times 10^{-15}$ & $9.4574 \times 10^{-16}$ & $2.6236 \times 10^{-15}$ & $3.7549 \times 10^{-15}$  \\
$tr(\PXi)$  &  &2 & 2 & 2 & 2 & 2\\
$\|\PXi^2-\PXi\|_2$ & & $1.1448 \times 10^{-15}$ & $7.4439 \times 10^{-15}$ & $4.0245 \times 10^{-15}$ & $2.5933 \times 10^{-15}$ & $3.8816 \times 10^{-15}$\\
\hline
\end{tabular}}
\label{t013}
\end{table}

Thus  there  doesn't  exist  of a neighborhood of the unperturbed system
 in which any rank $2$ perturbation of the system is stable.

 \item
 Taking  $U=V$,
 Figure \ref{a3}  shows  that  $\widetilde{X}(t)\equiv X_1(t),\;
\forall\,t\in[0,\;\frac{2\pi}{\sqrt{7}}]$ for any matrix $U$  of
rank $3$ taken in $\{U,\;10^{-1}U,\;10^{-2}U,\;10^{-3}U\}.$
In the first two  subfigures of Figure \ref{a1}, we see that
$\Psi(t)\leq 1.4\times 10^{-12}$,  while in the other subfigures,
 we note that $\Psi(t)\leq 8 \times 10^{-13}.$

 \begin{figure}[h!]
\center
\psfig{figure=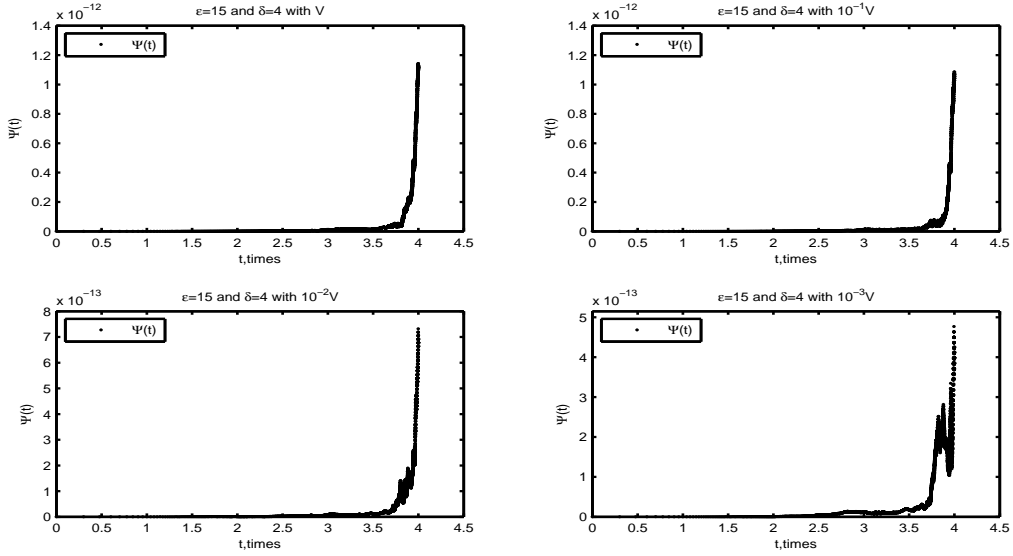,height=80mm,width=160mm}
\caption{Comparison of two solutions.}
 \label{a3}
\end{figure}
 However    Table \ref{t014}   shows  that the perturbed system  is not stable for any matrix
 taken in $\{U,10^{-1}U,10^{-2}U,10^{-2}U,O_6\}$.

 \begin{table}[h!]
\caption{Checking  of the (strong) stability of (\ref{er4}) by the dichotomy approach (Example 1)}
\center
{\small \begin{tabular}{lllllll}
\hline
 & & U & $10^{-1}U$ & $10^{-2} U$ & $10^{-3}$  & $U \equiv 0$\\
\hline
$\|S^{(n)}\|$ & & $1.3786\times 10^{+53}$ & $5.7183\times 10^{+30}$ &  $1.7903\times 10^{+35}$ & $1.9701\times 10^{+35}$ & $1.9720\times 10^{+35}$\\
$\delta_S$ & & 0  & 0 & 0 &  0 & 0\\
            \hline
$tr(\PXz)$ & &2 & 2 & 2 & 2 & 2\\
$\|\PXz^2-\PXz\|_2$& & $4.3586 \times 10^{-16}$  & $9.1634 \times 10^{-16}$ & $9.5301 \times 10^{-16}$ & $2.0073 \times 10^{-156}$ & $2.1088 \times 10^{-15}$  \\
$tr(\PXi)$  &  &2 & 2 & 2 & 2 & 2\\
$\|\PXi^2-\PXi\|_2$ & & $3.8829 \times 10^{-16}$ & $3.8684 \times 10^{-15}$ & $4.5557 \times 10^{-15}$ & $2.0032 \times 10^{-15}$ & $6.0387 \times 10^{-15}$\\
\hline
\end{tabular}}
\label{t014}
\end{table}

\end{itemize}

\end{itemize}

\end{exe}


\begin{exe}\label{e2}
 Consider the following differential system:
\begin{equation}\label{Eqq}
\left\{\begin{array}{lll}
 \dfrac{d^2\beta_1}{dt^2}+\left(4+a\cos(7t)\right)\beta_1+b\beta_3cos(14t)=0\\\\
  \dfrac{d^2\beta_2}{dt^2}+\left(a+b\sin(14t)\right)\beta_2+a\beta_3\sin(35t)=0\\\\
   \dfrac{d^2\beta_3}{dt^2}+3\beta_3+b\beta_1\cos(14t)+a\beta_2\sin(35t)=0
\end{array}
\right.,
\end{equation}
where $a\in\Rset$ and $b\in\Rset^{\star} $ are real parameters. Let
$$
\beta=\left(\begin{array}{c}
\beta_1\\
\beta_2\\
\beta_3

\end{array}
\right)\quad x=\left(\begin{array}{c}
 \beta\\
 \dfrac{d\beta}{dt}
\end{array}
\right).
$$
 System (\ref{Eqq}) can be written as a Hamiltonian  of the form (\ref{Eq2}) with $T=\dfrac{2\pi}{7}$ and

$$
P(t)=\left(\begin{array}{ccc}
4+a\cos(7t)& 0& b\cos(14t)\\
0&a+b\sin(14t)& a\sin(35t)\\
b\cos(14t)& a\sin(35t)& 3
\end{array}
\right)\quad  \text{and} \quad  H(t)=\left(
\begin{array}{ccc}
P(t)& 0_3\\
0_3 & I_3
\end{array}
\right).
$$
We show that the rank $k=2,3$ perturbation of the fundamental solution of (\ref{Eq2})
 is the solution of its rank $k=2,3$ perturbation system. Consider
$$
\Psi(t)=\|\widetilde{X}(t)-X_1(t)\|,\,\forall\, t\in
[0,\;\frac{2\pi}{7}]
$$
where $X_1(t)=(I+UU^TJ)X(t)$ and $\widetilde{X}(t))_{t\in[0,\,\frac{2\pi}{7}]}$ is the solution of the rank $k=2,3$
 perturbation Hamiltonian system (\ref{er4}) of (\ref{Eq2}).
 the following figures represent the norm of the difference between $\widetilde{X}_1(t)$ and $\widetilde{X}(t)$.

 \begin{itemize}
\item[$\bullet$]
for $a=2$ and $b=2,$ consider the  random  matrix
$
A=\left[
    \begin{array}{ccc}
      0.5377 & -0.4336 & 0.7254 \\
      1.8339 & 0.3426 & -0.0631 \\
      -2.2588 & 3.5784 & 0.7147 \\
      0.8622 & 2.7694 & -0.2050 \\
      0.3188 & -1.3499 & -0.1241 \\
      -1.3077 & 3.0349 & 1.4897 \\
    \end{array}
  \right]
$.  Applying Algorithm \ref{Algo} to matrix $A$,  we get the following
matrix
$$ V=\left[
  \begin{array}{ccc}
   -0.1599 & 0.0405 & 0.5357\\
-0.5453 & -0.3844 & 0.2439\\
0.6717 & -0.4143 & 0.1441\\
-0.2564 & -0.7645 & -0.1887\\
-0.0948 & 0.1272 &0.6857\\
0.3889 &-0.2798 &0.3563
  \end{array}
\right]
$$
of rank 3 whose columns generate an isotropic subspace.

\begin{itemize}
  \item
  Considering   the matrix  $U=V(:,1:2)$, we get Figure  \ref{nal1}
  perturbing system  (\ref{Eq2})  by  matrices  taken  in
  $\{U,\;10^{-1}U,\;\;10^{-2}U,\;\;10^{-3}U\}$.

In Figure \ref{nal1}, we note that  all  the figures verify
$\Psi(t)\leq 2.5 \times  10^{-14}$.
This shows that $\widetilde{X}(t)\equiv X_1(t),\;
\forall\;t\in[0,\;\frac{2\pi}{7}].$

\begin{figure}[H]
\center
\psfig{figure=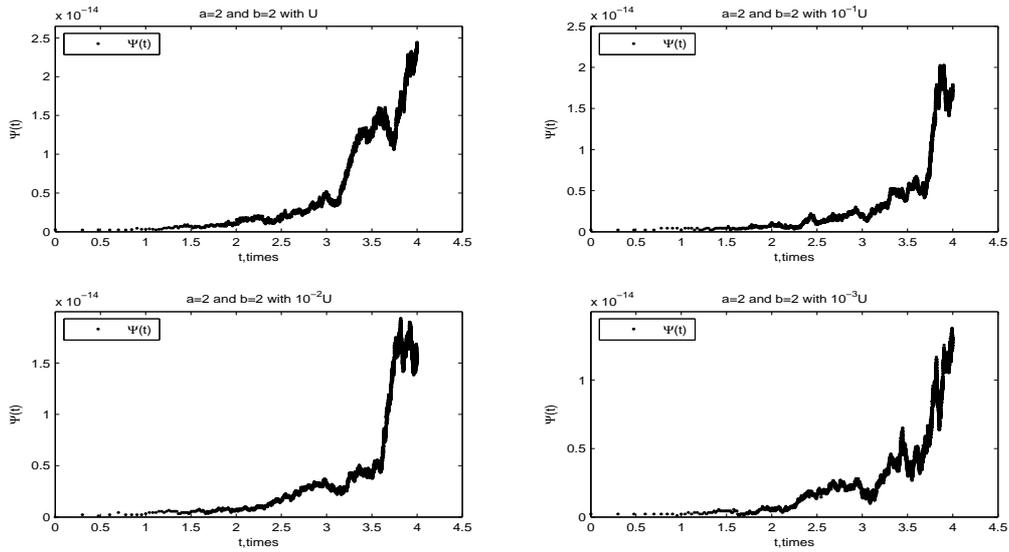,height=80mm,width=160mm}
\caption{Comparison of two solutions (Example 2)}
 \label{nal1}
\end{figure}
In this first example, the unperturbed system is strongly stable and the rank $2$ perturbation
 of the system is also strongly stable for any matrix of rank $2$ belonging to
  $\{10^{-1}U,\;\;10^{-2}U,\;\;10^{-3}U\}$ and is  unstable for any matrix of
  rank $2$  with $U$.  This discussion is summaries in Table $5$

\begin{table}[H]
\label{t015} \caption{Checking  of the (strong) stability of
(\ref{er4}) by the approachs defined in
\cite{Dos-Sad_13,dos-cou-sam_13} (Example 2)} \center {\small
\begin{tabular}{lllllll}
\hline
 & & $U$& $10^{-1}U$ & $10^{-2}U$ & $10^{-3}U$  & $U\equiv 0$\\
\hline
$\|S^{(n)}\|,\;(n=30)  $&& $7.8892$ &$2.1128$&  $2.1115$& $ 2.1115$ & $ 2.1115$\\
$\delta_S$ &&$0.4628$  & $0.2574$  & $0.2528$  & $0.2528$ & $0.2528$ \\
            \hline
$tr(\PXz)$ &  &$-4.2126\times10^{-17}$ &$-1.9678\times10^{-16}$  & $1.3498\times10^{-16}$ & $-2.2171\times10^{-16}$ & $    -4.2126\times10^{-17}$\\
$ \|\PXz^2-\PXz\|_2 $ &  &$1.3434\times10^{-16}  $&$ 1.1411 \times10^{-16} $ & $   1.1709\times10^{-16} $ &$   1.4804\times10^{-16} $&$ 1.3434\times10^{-16} $\\

$tr(\PXi)$ & & $-3.6385\times10^{-17} $ &$-1.0876\times10^{-17} $& $ -9.4959\times10^{-17}$&$-2.2708\times10^{-16}$&$-3.6385\times10^{-17}$\\

$ \|\PXi^2-\PXi\|_2 $ & & $1.4218\times10^{-16} $&$ 1.4991\times 10^{-16} $ & $   1.0851\times10^{-16}$ &$ 1.7867\times10^{-16}$ &$ 1.4218\times10^{-16} $\\

$tr(\PXr)$&  & $6$  & $6 $&$6 $&$6$ &$ 6$\\

$ \|\PXr^2-\PXr\|_2$&  &$0$&$0$ &$0$&$0$ & $0$ \\

$tr(\PXg) $&  &$0$  &$ 0 $  &$ 0 $  &$0$   & $0$\\
$ \|\PXg^2-\PXg\|_2$&  &$0$  &$ 0 $  &$ 0 $  &$0$ & $0$\\

$\|\PXr+\PXg-I_6\|_2$ &  &$0$  &$0 $  &$ 0 $  &$0$& $0$\\
            \hline
\end{tabular}}
\end{table}
This justifies the existence of a neighborhood of the unperturbed
system in which any  random rank $2$ perturbation of the system
remains strongly stable.
\item Let's take $U=V$ ;  Figure \ref{ami1} shows that
$\Psi(t)<3.5\times10^{-14},\,\forall\, t\in [0,\;\frac{2\pi}{7}]$
for all  the  figures. This shows that $\widetilde{X}(t)\equiv X_1(t).$
\begin{figure}[H]
\center \psfig{figure=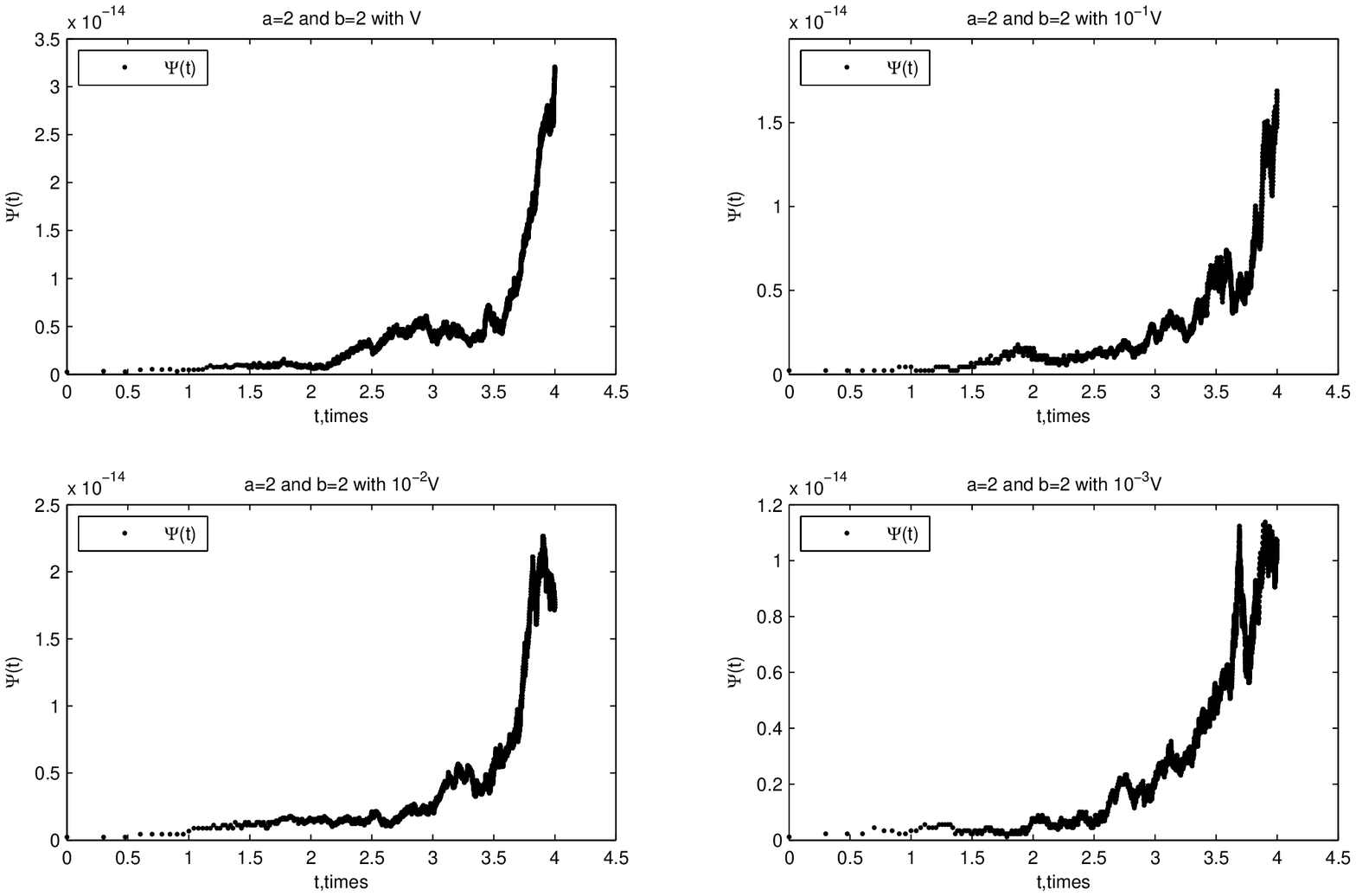,height=80mm,width=160mm}
\caption{Comparison of two solutions (Example 2)}
 \label{ami1}
\end{figure}
In this case, the unperturbed system (\ref{Eq2}) is strongly stable
for  any random matrix $U$ of rank $3$ belonging to
$\{V,\;10^{-1}V,\;10^{-2}V,\,10^{-3}V\}$.  This is illustrated in  Table \ref{t016}
\begin{table}[H]
\caption{Checking  of the (strong) stability of (\ref{er4}) by the
approachs defined in \cite{Dos-Sad_13,dos-cou-sam_13}  (Example 2)}
 \center {\small
\begin{tabular}{lllllll} \hline
 & & $V$& $10^{-1}V$ & $10^{-2} V$ & $10^{-3}V$  & $V\equiv 0$\\
\hline
$\|S^{(n)}\|,\;(n=30)  $&& $11.8852$ &$2.1209$&  $2.1116$& $ 2.1115$ & $ 2.1115$\\
$\delta_S$ &&$0.3599$  & $0.2532$  & $0.2528$  & $0.2528$ & $0.2528$ \\
            \hline

$tr(\PXz)$ &  &$ -2.0382\times10^{-16}$ &$ 1.2364\times10^{-16}$  & $ 1.0578\times10^{-18}$ & $ -9.0143\times10^{-17}$ & $    -4.2126\times10^{-17}$\\
$ \|\PXz^2-\PXz\|_2 $ &  &$ 1.1186\times10^{-16}  $&$  1.7572\times10^{-16} $ & $   5.8457\times10^{-17} $ &$    8.4476\times10^{-17} $&$ 1.3434\times10^{-16} $\\

$tr(\PXi)$ & & $ -2.2819\times10^{-16} $ &$ -1.0701\times10^{-17} $& $  -6.8127\times10^{-19}$&$    -1.0956\times10^{-17}$&$-3.6385\times10^{-17}$\\

$ \|\PXi^2-\PXi\|_2 $ & & $  1.1307\times10^{-16} $&$ 2.5771\times 10^{-16} $ & $    6.3190\times10^{-17}$ &$   1.1784 \times10^{-16}$ &$ 1.4218\times10^{-16} $\\

$tr(\PXr)$&  & $6$  & $6 $&$6 $&$6$ &$ 6$\\

$ \|\PXr^2-\PXr\|_2$&  &$0$&$0$ &$0$&$0$ & $0$ \\

$tr(\PXg) $&  &$0$  &$ 0 $  &$ 0 $  &$0$   & $0$\\
$ \|\PXg^2-\PXg\|_2$&  &$0$  &$ 0 $  &$ 0 $  &$0$ & $0$\\

$\|\PXr+\PXg-I_6\|_2$ &  &$0$  &$0 $  &$ 0 $  &$0$& $0$\\

            \hline

\end{tabular}}
\label{t016}
\end{table}
This justifies the existence of a neighborhood of the unperturbed
system in which any  random rank $3$ perturbation of the system
remains strongly stable.
\end{itemize}

\item[$\bullet$] For $a=18.95$ and $b=2,$ consider the random matrix
$$
A= \left[
\begin{array}{ccc}
1.4090&0.4889&0.8884\\
1.4172&1.0347&-1.1471\\
0.6715&0.7269&-1.0689\\
-1.2075&-0.3034&-0.8095\\
0.7172&0.2939&-2.9443\\
1.6302&-0.7873&1.4384
\end{array}
\right].
$$
Applying the algorithm \ref{Algo} to the matrix $A$, we have the following
random matrix
$$
V=\left[
\begin{array}{ccc}
 -0.4677&-0.3232&0.6729\\
 -0.4704&-0.4311&-0.3311\\
 -0.2229&-0.1819&0.2867\\
 0.4008&-0.1814&0.1744\\
 -0.2381&-0.3203&-0.5692\\
 -0.5412&0.7356&-0.0323
\end{array}
\right]
$$
of rank $3$ whose columns generate an isotropic subspace.
 \begin{itemize}
\item Let's take $U=V(:,1:2)$.   The following Figure shows that
$\widetilde{X}(t)\equiv
X_1(t),\;\,\forall\;t\in[0,\;\frac{2\pi}{7}]$,  for any matrix of
rank $2$ belonging to $\{U,\;10^{-1}U,\;10^{-2}U,\;10^{-3}U\}.$
Thus  in Figure \ref{ami4}, we can observe that $\Psi(t)\leq
1\times10^{-13},\,\forall\, t\in [0,\;\frac{2\pi}{7}]$ for all
figures.
\begin{figure}[H]
\center \psfig{figure=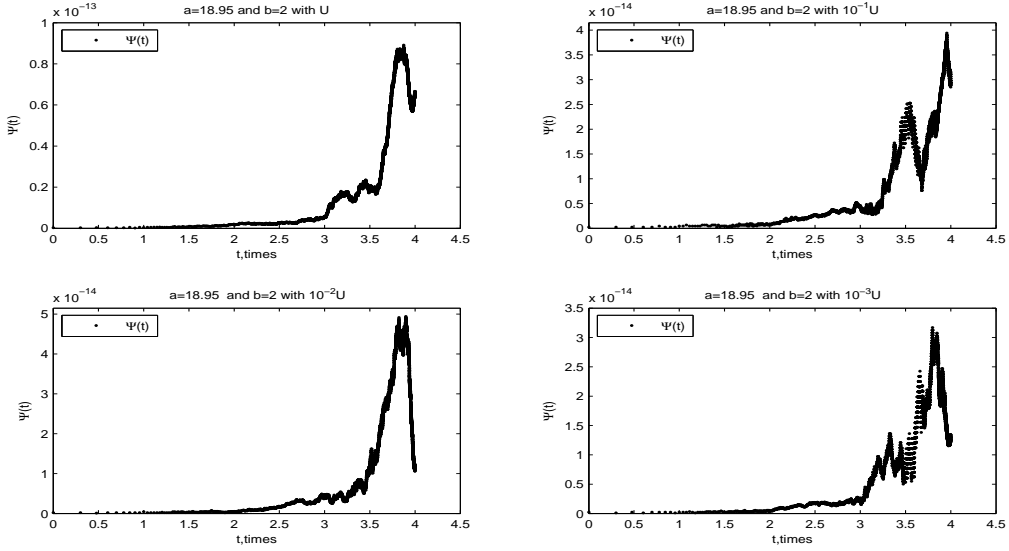,height=80mm,width=160mm}
\caption{Comparison of two solutions (Example 2)}
 \label{ami4}
\end{figure}
In this case, the unperturbed system is unstable and its rank $2$
perturbation systems remain unstable for any matrix of rank $2$
taken in $\{U,\,10^{-1}U,\;10^{-2}U,\;10^{-3}U\}$.  This is
illustrated in Table $\ref{t017}$
\begin{table}[H]

\caption{Checking  of the (strong) stability of (\ref{er4}) by the
approachs defined in \cite{Dos-Sad_13,dos-cou-sam_13} (Example 2)}
\label{t017} \center {\small
\begin{tabular}{lllllll} \hline
 & & $U$ & $10^{-1}U$ & $10^{-2} U$ & $10^{-3}U$  & $U\equiv 0$\\
\hline
$\|S^{(n)}\|,\;(n=30)  $&& $ 4.9239\times 10^{+57}$ &$5.7014\times 10^{+45}$&  $ 5.2235\times 10^{+45}$& $  5.2189\times 10^{+45}$ & $5.2189\times 10^{+45}$\\
            \hline
$tr(\PXz)$ &  &$2$ &$1$  & $1$ & $1$ & $1$\\
$ \|\PXz^2-\PXz\|_2 $ &  &$ 7.3293\times10^{-16} $&$ 3.0130\times10^{-16} $ & $ 1.8699\times10^{-16} $ &$  1.8367\times10^{-16} $&$7.4397\times10^{-17} $\\

$tr(\PXi)$ & & $2 $ &$1 $& $ 1$&$1 $&$1$\\

$ \|\PXi^2-\PXi\|_2 $ & & $1.6104\times10^{-15} $&$2.0658\times 10^{-16} $ & $ 2.2422\times10^{-16}$ &$1.0577\times10^{-15}$ &$1.1106\times10^{-15} $\\
\hline

\end{tabular}}
\label{t017}
\end{table}
This justifies the existence of a neighborhood of the unperturbed
system in which any rank $2$ perturbation of the system remains
unstable.
\item In this latter example, we consider $U=V$ to perturb system (\ref{Eq2}).
 Figure \ref{ami3} is obtained for value of any random matrix $U$ of rank $3$ taken
in $\{U,\,10^{-1}U,\,10^{-2}U,\,10^{-3}U\}$.  We can see that
$\Psi(t)\leq 5.25\times10^{-14},\,\forall\, t\in
[0,\;\frac{2\pi}{7}]$ for all figures. Hence, we have
$\widetilde{X}(t)\equiv
X_1(t),\;\,\forall\;t\in[0,\;\frac{2\pi}{7}]$.
\begin{figure}[H]
\center \psfig{figure=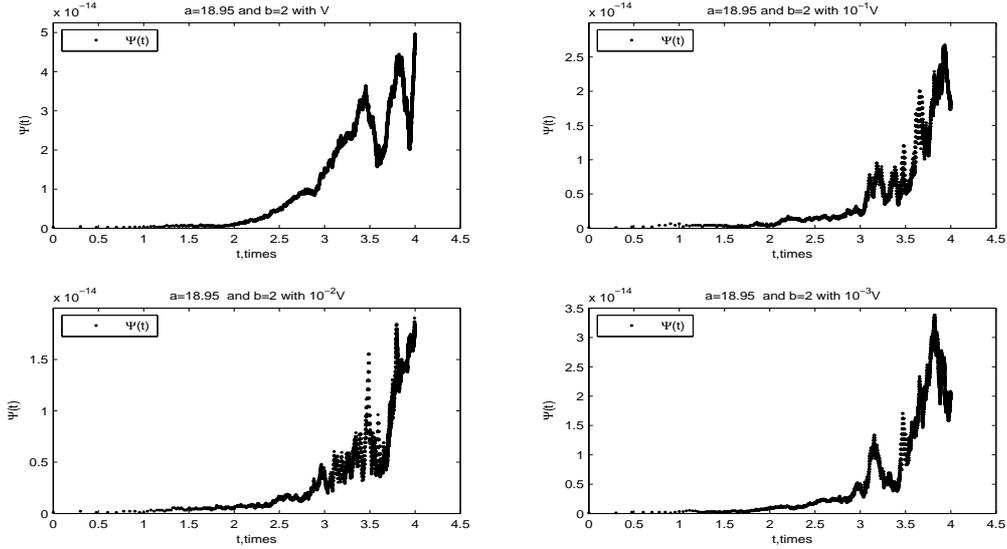,height=80mm,width=160mm}
\caption{Comparison of two solutions (Example 2)}
 \label{ami3}
\end{figure}
However the following Table $\ref{t018}$ shows that the perturbed system is
not stable for any random matrix $U$ of rank $3$ taken in
$\{U,\;10^{-1}U,\;10^{-2}U,\;10^{-3}U,\,O_{6,3}\}$.
\begin{table}[H]
\caption{Checking  of the (strong) stability of (\ref{er4}) by the
approachs defined in \cite{Dos-Sad_13,dos-cou-sam_13} (Example 2)}
 \center {\small
\begin{tabular}{lllllll}

\hline
 & & $U$  & $10^{-1}U$ & $10^{-2} U$ & $10^{-3}U$  & $U\equiv 0$\\
\hline
 $\|S^{(n)}\|,\;(n=30)  $&& $1.6182\times 10^{+55}$ &$7.3921\times 10^{+45}$&  $ 5.2372\times 10^{+45}$& $ 5.2191\times 10^{+45}$ & $5.2189\times 10^{+45}$\\
            \hline
$tr(\PXz)$ &  &$2$ &$1$  & $1$ & $1$ & $1$\\
$ \|\PXz^2-\PXz\|_2 $ &  &$ 2.0170\times10^{-16} $&$ 1.3412\times10^{-16} $ & $ 2.9536\times10^{-16} $ &$  2.0371\times10^{-16} $&$7.4397\times10^{-17} $\\

$tr(\PXi)$ & & $2 $ &$1 $& $ 1$&$1 $&$1$\\

$ \|\PXi^2-\PXi\|_2 $ & & $8.5346\times10^{-16} $&$5.0453\times 10^{-16} $ & $4.8715\times10^{-16}$ &$1.0011\times10^{-15}$ &$1.1106\times10^{-15} $\\
            \hline
\end{tabular}}
\label{t018}
\end{table}
\end{itemize}

\end{itemize}
\end{exe}
%
%

\section{Concluding remarks}\label{Sec6}

In this research work, after defining a rank $k$ perturbation theory
  of a Hamiltonian system with periodic coefficients  with $k\ge 2$,
we showed that the solution of its rank $k$ perturbation  is
the same as the rank $k$ perturbation of the solution of unperturbed  system.
   Then we analyzed Jordan canonical form of the solution of
    the unperturbed system when it is subjected to a rank $k$  perturbation.
    This analysis is  a generalization of that made by  M.  Dosso, et al. in  \cite{DAK_16} in the case of a
     rank  one pertubation of Hamiltonian system  with
    periodic  coefficients.
    Finally we proposed numerical examples which confirm this theory.
    However, these examples use an algorithm that randomly
    constructs an isotropic subspace basis.
     From these numerical examples we notice that when a
      system is strongly stable (respectively unstable),
      there exists a neighborhood in which any rank $k$ perturbation
       of the system in this neighborhood remains
       strongly stable (respectively unstable)

      In future work, we will compare the zone of stability (strong)
       of the Hamiltonian systems with periodic coefficients
        and their  rank $k\ge 1$ perturbations.
        Then it would be boring to find a link between any random perturbation and  rank $k\ge 1$ perturbation  of
        Hamiltonian system with periodic coefficients.


\begin{thebibliography}{99}
 \bibitem{Brez_02}
C. Brezinski, Computational Aspects of Linear Control, Kluwer Academic
Publishers, 2002.
\bibitem{DAK_16}
M.Dosso, T.G.Y. Arouna., and  J.C.Koua. Brou, \emph{On rank one
perturbations of Hamiltonian system with periodic coefficients.}
Wseas Translations on Mathematics. Volume 15, 2016, Pages 502-510.
\bibitem{Dos-Sad_13}
M. Dosso  and M. Sadkane. On the strong stability of
symplectic matrices.
 Numerical Linear Algebra with Applications 20(2) (2013), 234-249.


\bibitem{dos-cou_14}   
M.Dosso and N. Coulibaly, \emph{Symplectic matrices and strong stability of
Hamiltonian systems with periodic coefficients.} Journal of
Mathematical Sciences : Advances and Applications. Vol. 28(2014), pp 15-38.
\bibitem{dos-cou-sam_13}
M. Dosso, N. Coulibaly and L. Samassi, Strong stability of symplectic matrices using a spectral dichotomy method,
Far East Journal  Applied Mathematics. Vol. 79, No 2, 2013, pp 73-110.
\bibitem{fre-meh-xu_02}   
G. Freiling, V. Mehrmann, and H. Xu. Existence, \emph{uniqueness and
parametrization of Lagrangian invariant subspaces.} SIAM J. Matrix
Anal. Appl., 23:1045{1069, 2002.}
\bibitem{god_92}
Godunov  SK, Verification  of boundedness for  the powers  of symplectic  matrices
with   the help  of  averaging.  Siber. Math. J. 1992 ; 33 : 939-949.
\bibitem{god_89} 
S.K. Godunov, Stability of iterations of symplectic transformations,
Siberian Math. J. 30, 54-63 (1989).

\bibitem{god-sad_01}
S.K. Godunov, M. Sadkane,  Numerical determination of a canonical
form of a symplectic matrix, Siberian Math. J.  42, 629-647 (2001)
\bibitem{god-sad_06}
S.K. Godunov, M. Sadkane,  Spectral analysis of symplectic matrices
with application to the theory of parametric resonance, SIAM J.
Matrix Anal. Appl. 28, 1083-1096  (2006)
\bibitem{goh-lan-rod_05}
I. Gohberg, P. Lancaster, and L. Rodman. Indefinite Linear Algebra and
Applications. Birkhäuser, Basel, 2005.

\bibitem{hass_99}
B. Hassibi, A.  H.  Sayed, T.  Kailath, \emph{Indefinite-Quadratic
Estimation   and Control,} SIAM, Philadelphia, PA, 1999.

\bibitem{kres_05}
D. Kressner, Perturbation  bounds for isotropic invariant subspaces of skew-Hamiltonian matrices.
 SIAM J.  Matrix Analysis  Applications  26(4): 947-961, 2005.

\bibitem{kres_46}
D. Kressner, Numerical  Methods for  General  and Structured Eigenvalue  Problems.
Lecture  Notes  in Computational Science  and Engineering  46, Springer 2005.
  ISBN 978-3-540-24546-9, pp. I-XIV, 1-264.

\bibitem{lan-rod_95}
P. Lancaster and L. Rodman. The Algebraic Riccati Equation. Oxford University
Press, Oxford, 1995.


\bibitem{Meh}
C. Mehl, V. Mehrmann,  A.  C. M. Ran  and L. Rodman.  Eigenvalues
Perturbation  theory  of  structured  matrices  under  generic  structured  rank
one perturbations ;  Symplectic, othogonal  and unitary  matrices. BIT, 54(2014), 219-255.
\bibitem{Meh1}
C. Mehl, V. Mehrmann,  A.  C. M. Ran  and L. Rodman.
Perturbation analysis of Lagrangian invariant subspaces of symplectic matrices.
Linear and Multilinear Algebra, 57:141-184, 2009.
\bibitem{Saad_11}
Y. Saad, Numerical methods for large eugenvalue  problems.
 SIAM, 2nd ed., 2011.
\bibitem{Wat07}
D. S. Watkins, The matrix eigenvalue problem.
 GR and Krylov Subspace Methods
,SIAM,  Philadelphia, 2007.
\bibitem{YS}
V.A. Yakubovich,  V.M. Starzhinskii,
\newblock{ Linear differential equations with periodic coefficients},
\newblock Vol. 1 \& 2., Wiley, New York  (1975)
\bibitem{Yan2}
YAN  Qing-you.  The properties  of  a kind  og random  symplectic  matrices.
 Applied  mathematics  and Mechanics. Vol  23, No 5, May  2002.
\end{thebibliography}
\end{document}